\def\today{\ifcase\month\or
  January\or February\or March\or April\or May\or June\or
  July\or August\or September\or October\or November\or December\fi
  \space\number\day, \number\year}
\DeclareMathOperator{\sgn}{\mathrm{sgn}}
 \newtheorem{theorem}{Theorem}[section]
 \newtheorem{lemma}[theorem]{Lemma}
 \newtheorem{corollary}[theorem]{Corollary}
 \theoremstyle{definition}
 \theoremstyle{remark}
 \newcommand{\mc}{\mathcal}
 \newcommand{\A}{\mc{A}}
 \newcommand{\J}{\mc{J}}
 \newcommand{\K}{\mc{K}}
 \newcommand{\KK}{K^{\star}}
 \newcommand{\rr}{\mc{R}}
 \newcommand{\C}{\mathbb{C}}
 \newcommand{\R}{\mathbb{R}}
 \newcommand{\Z}{\mathbb{Z}}
 \newcommand{\csch}{\mathrm{csch}}
 \newcommand{\sech}{\mathrm{sech}}
 \newcommand{\tF}{\widehat{F}}
 \newcommand{\ttK}{\widetilde{K}}
 \newcommand{\uk}{\widehat{k}}
  \newcommand{\wk}{\widetilde{k}}
\newcommand{\wv}{\widetilde{v}}
 \newcommand{\tK}{\widehat{K}}
 \newcommand{\tq}{\widehat{q}}
 \newcommand{\h}{\frac12}
 \newcommand{\hh}{\tfrac12}
 \newcommand{\dt}{\text{\rm d}t}
 \newcommand{\du}{\text{\rm d}u}
 \newcommand{\dv}{\text{\rm d}v}
 \newcommand{\dw}{\text{\rm d}w}
 \newcommand{\dx}{\text{\rm d}x}
 \newcommand{\dy}{\text{\rm d}y}
 \newcommand{\dl}{\text{\rm d}\lambda}
 \newcommand{\dmu}{\text{\rm d}\mu(\lambda)}
 \newcommand{\dnu}{\text{\rm d}\nu(\lambda)}
 \newcommand{\dnnu}{\text{\rm d}\nu_n(\lambda)}
\begin{document}

\title[Extremal functions]{Some extremal functions in Fourier analysis, III}

\author[Carneiro and Vaaler]{Emanuel Carneiro* and Jeffrey D. Vaaler**}
\address{Department of Mathematics, University of Texas at Austin, Austin, TX 78712-1082.}
\email{ecarneiro@math.utexas.edu}
\address{Department of Mathematics, University of Texas at Austin, Austin, TX 78712-1082.}
\email{vaaler@math.utexas.edu}
\allowdisplaybreaks
\thanks{*Research supported by CAPES/FULBRIGHT grant BEX 1710-04-4.}
\thanks{**Research supported by the National Science Foundation, DMS-06-03282.}

\numberwithin{equation}{section}


\subjclass[2000]{Primary 41A30, 41A52, 42A05. Secondary 41A05, 41A44, 42A10.}

\date{\today}

\keywords{approximation, entire functions, exponential type}

\begin{abstract}
We obtain the best approximation in $L^1(\R)$, by entire functions of exponential type, for a class of even functions that includes $e^{-\lambda|x|}$, where $\lambda >0$, $\log |x|$ and $|x|^{\alpha}$, where $-1 < \alpha < 1$. We also give periodic versions of these results where the approximating functions are trigonometric polynomials of bounded degree.
\end{abstract}

\maketitle

\section{Introduction}

An entire function $K:\C \to \C$ is of exponential type $\sigma \geq 0$ if, for any $\epsilon >0$, there exists a constant $C_{\epsilon}$ such that for all $z \in \C$ we have
$$|K(z)| \leq C_{\epsilon} e^{(\sigma + \epsilon)|z|}\,.$$
Given a function $f:\R\to\R$ we address here the problem of finding an entire function $K(z)$ of exponential type at most $\pi$ such that the integral
\begin{equation} \label{prob}
 \int_{-\infty}^{\infty} |K(x) - f(x)| \, \dx
\end{equation}
is minimized. A typical variant of this problem occurs when we impose the additional condition that $K(z)$ is real on $\R$ and satisfies $K(x) \geq f(x)$ for all $x \in \R$. In this case a minimizer of the integral (\ref{prob}) is called an extremal majorant of $f(x)$. Extremal minorants are defined analogously.\\

The study of these extremal functions dates back to A. Beurling in the 1930's, who solved the problem (\ref{prob}) (and its majorizing version) for $f(x) = \sgn(x)$. A complete collection of his results and many applications to analytic number theory (including Selberg's proof of the large sieve inequality) can be found in the paper \cite{V} by J.D. Vaaler. In \cite{GV}, Graham and Vaaler constructed the extremal majorants and minorants for the function $f(x) = e^{-\lambda |x|}$, where $\lambda >0$. Recently, Carneiro and Vaaler in \cite{CV} were able to extend the construction of extremal majorants for a wide class of even functions that includes $\log |x|$ and $|x|^{\alpha}$, where $-1 < \alpha < 1$. The case $f(x) = \log |x|$, which can be viewed as a Fourier conjugate of $f(x) = \sgn(x)$, is particularly important, providing a number of interesting applications. Other problems on approximation and majorization by entire functions have been discussed in \cite{Lit}, \cite{Lit2}, \cite{M} and \cite{S}. Extensions of the problem to several variables are considered in \cite{BMV}, \cite{HV} and \cite{LV}.\\

The purpose of this paper, the third in this series, is to settle the best approximation problem (\ref{prob}) for the function $f(x) = e^{-\lambda|x|}$, where $\lambda>0$, and also for the same class of even functions considered in \cite{CV}, that includes $\log |x|$ and $|x|^{\alpha}$, where $-1 < \alpha < 1$.

We start by defining the entire function $z \mapsto K(\lambda,z)$ by

\begin{equation}\label{Intro1}
K(\lambda, z) = \left(\frac{\cos \pi z}{\pi}\right)\left\{ \sum_{n \in \Z} \frac{ (-1)^{n}\, e^{-\lambda\left|n- \tfrac{1}{2}\right|}}{\left(z - n + \tfrac{1}{2}\right)} \right\}.
\end{equation}
This function has exponential type $\pi$ and interpolates $e^{-\lambda|x|}$ at the integers plus a half. The construction of such functions and how they appear as natural candidates to our problem are explained in \cite[sections 2 and 3]{V}. Our first result is the following.
\begin{theorem}\label{thm1.1}
The function $K(\lambda, z)$ defined in {\rm(\ref{Intro1})} satisfies the following extremal property
\begin{itemize}
 \item[(i)] If $\ttK(z)$ is an entire function of exponential type at most $\pi\delta$, where $\delta>0$, then
\begin{equation}\label{Intro2}
\int_{-\infty}^{\infty}\left|e^{-\lambda|x|} - \ttK(x)\right|  \dx \geq \tfrac{2}{\lambda} - \tfrac{2}{\lambda}\,\sech\left(\tfrac{\lambda}{2\delta}\right),
\end{equation}
with equality if and only if $\ttK(z) = K(\delta^{-1}\lambda,\delta z)$. 

\item[(ii)] For $x \in \R$ we have
\begin{equation}\label{Intro3}
 \sgn(\cos \pi x) = \sgn\left\{ e^{-\lambda |x|} - K(\lambda,x ) \right\}.
\end{equation}
\end{itemize}
\end{theorem}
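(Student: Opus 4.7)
The approach follows the standard Beurling--Selberg--Vaaler template: part (ii) identifies the sign function that realizes the $L^1$ distance, and part (i) then drops out of a duality pairing against $\sgn(\cos\pi\delta x)$. By the substitution $w = \delta z$, part (i) reduces to the case $\delta = 1$ of an analogous problem with $\lambda$ replaced by $\lambda/\delta$, so the real content of the proof is (a) prove the sign identity (ii), (b) evaluate $\int_\R e^{-\lambda|x|}\sgn(\cos\pi\delta x)\,\dx$, and (c) show that $\int_\R \tilde K(x)\sgn(\cos\pi\delta x)\,\dx = 0$ for every admissible approximant $\tilde K$.

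\textbf{Part (ii).} A residue computation on \eqref{Intro1} shows $K(\lambda, n-\tfrac12) = e^{-\lambda|n-1/2|}$: at $z = n_0 - \tfrac12$ the factor $\cos\pi z$ has a simple zero that cancels the pole of the $n = n_0$ term and annihilates the rest. Hence $E(\lambda,x) := e^{-\lambda|x|} - K(\lambda,x)$ already vanishes at every half-integer, and \eqref{Intro3} is the assertion that the ratio $E(\lambda,x)/\cos\pi x$ is nonnegative on $\R$. To prove this I would use the Gaussian subordination
\[
e^{-\lambda |x|} \;=\; \frac{\lambda}{2\sqrt{\pi}} \int_0^\infty t^{-3/2}\, e^{-\lambda^2/(4t)}\, e^{-tx^2}\,\dt,
\]
which is a \emph{positive} linear superposition, and push the interpolation formula \eqref{Intro1} inside the integral (absolute convergence is easy to check). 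This reduces the sign statement to the analogous claim for the Gaussian $e^{-tx^2}$, $t > 0$, which in turn can be extracted from the partial-fraction/theta-function expansion of the corresponding Gaussian interpolant as used in \cite{CV} and \cite{GV}. I expect this reduction, and the verification that the Gaussian kernel has the correct sign relative to its half-integer interpolant, to be the main technical obstacle.

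\textbf{Part (i).} Let $\tilde K$ be entire of exponential type at most $\pi\delta$ with $E := e^{-\lambda|x|} - \tilde K \in L^1(\R)$. Since $e^{-\lambda|x|} \in L^1(\R)$, also $\tilde K \in L^1(\R)$, so $\widehat{\tilde K}$ is continuous and (by Paley--Wiener) supported in $[-\delta/2,\delta/2]$, whence $\widehat{\tilde K}(\pm\delta/2) = 0$. The Fourier expansion
\[
\sgn(\cos\pi\delta x) \;=\; \frac{4}{\pi}\sum_{k=0}^{\infty} \frac{(-1)^k}{2k+1}\cos\bigl((2k+1)\pi\delta x\bigr)
\]
places the distributional spectrum of $\sgn(\cos\pi\delta \cdot)$ at the points $\pm(2k+1)\delta/2$. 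Pairing with $\widehat{\tilde K}$ via Parseval (with a Gaussian regularization in frequency to handle the boundary points $\pm\delta/2$ rigorously), every term vanishes: the $k\geq 1$ contributions because $(2k+1)\delta/2$ lies outside $\supp\widehat{\tilde K}$, and the $k = 0$ contributions because $\widehat{\tilde K}(\pm\delta/2) = 0$. Hence $\int_\R \tilde K(x)\sgn(\cos\pi\delta x)\,\dx = 0$, and therefore
\[
\int_{-\infty}^{\infty} E(x)\sgn(\cos\pi\delta x)\,\dx \;=\; \int_{-\infty}^{\infty} e^{-\lambda|x|}\sgn(\cos\pi\delta x)\,\dx.
\]
The right-hand side is a convergent alternating geometric series on $(0,\infty)$ that sums to exactly $\tfrac{2}{\lambda} - \tfrac{2}{\lambda}\sech(\lambda/2\delta)$. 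Combined with the trivial bound $\int_\R E(x)\sgn(\cos\pi\delta x)\,\dx \leq \int_\R |E(x)|\,\dx$, this yields \eqref{Intro2}.

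\textbf{Equality case.} Equality in the trivial bound forces $\sgn E(x) = \sgn(\cos\pi\delta x)$ almost everywhere, so by continuity $\tilde K((n+\tfrac12)/\delta) = e^{-\lambda|(n+1/2)/\delta|}$ for every $n \in \Z$. The candidate $K(\delta^{-1}\lambda,\delta z)$ already achieves these values (by applying part (ii) with parameter $\delta^{-1}\lambda$). Any $L^1$ function of exponential type $\pi\delta$ is automatically in $L^2$ (Plancherel--P\'olya) and is determined by its samples at the critical rate $1/\delta$ via the Shannon--Whittaker sampling theorem, so the difference $\tilde K(z) - K(\delta^{-1}\lambda,\delta z)$ must vanish identically, identifying the unique extremizer.
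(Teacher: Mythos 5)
Your part (i) and the uniqueness argument are essentially the paper's: dualize against $\sgn(\cos\pi\delta x)$, use that $\widehat{\ttK}$ is continuous (since $\ttK\in L^1$) and supported in $[-\delta/2,\delta/2]$ so it vanishes at $\pm\delta/2$, compute the alternating exponential integral, and then recover the extremizer from its values at the shifted lattice points; the paper phrases uniqueness via classical interpolation formulas for type-$\pi$ functions (giving a $\beta\cos\pi z$ ambiguity killed by integrability), while you invoke offset Shannon sampling for $L^2$ bandlimited functions, which amounts to the same thing. One cosmetic point: you should pair with $\bigl|\int E\,\sgn(\cos\pi\delta x)\,\dx\bigr|$ rather than the signed integral, but since you later compute it to be positive this is harmless.

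The substantive divergence, and the gap, is in part (ii). You propose to prove the sign identity by Gaussian subordination, writing $e^{-\lambda|x|}$ as a positive superposition of $e^{-tx^2}$ and reducing to the claim that the Gaussian error $e^{-tx^2}-K^G_t(x)$ has the sign of $\cos\pi x$. The reduction itself is structurally sound (the interpolation series does commute with the subordination integral by absolute convergence). But the Gaussian sign fact is \emph{not} contained in \cite{CV} or \cite{GV}: \cite{GV} treats $e^{-\lambda|x|}$ directly, and \cite{CV} (part II of this series) again integrates the $e^{-\lambda|x|}$ family in $\lambda$; neither establishes the $L^1$ sign identity for the Gaussian interpolant at half-integers. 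So as written, your argument outsources its hardest step to a result that does not exist in the cited references. The paper instead proves (ii) directly: it introduces the one-sided interpolant $A(\lambda,z)=(\sin\pi z/\pi)\sum_{n\ge0}(-1)^ne^{-\lambda n}/(z-n)$, derives Laplace-transform representations for $A$ on each half-plane, writes $K(\lambda,z)=e^{-\lambda/2}\{A(\lambda,z-\tfrac12)+A(\lambda,-z-\tfrac12)\}$, and obtains
\begin{equation*}
e^{-\lambda x}-K(\lambda,x)=\Bigl(\frac{\cos\pi x}{\pi}\Bigr)\int_0^\infty\bigl\{C(\lambda+w)-C(\lambda-w)\bigr\}e^{-xw}\,\dw,
\qquad C(w)=-\frac{1}{e^{w/2}+e^{-w/2}},
\end{equation*}
with the integrand strictly positive because $C$ is even and strictly increasing on $(0,\infty)$. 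This is self-contained and arguably simpler than the Gaussian route; to complete your version you would have to supply a proof of the Gaussian sign lemma from scratch, which is at least as much work as the paper's computation.
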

From Theorem \ref{thm1.1} we see that $x \mapsto K(\lambda,x)$ is integrable on $\R$. Its Fourier transform
\begin{equation}\label{Intro3.1}
\tK(\lambda,t) = \int_{-\infty}^{\infty}K(\lambda,x)e(-tx)\, \dx
\end{equation}
is a continuous function of the real variable $t$ supported on the interval $[-\hh, \hh]$. Here we write $e(z) = e^{2\pi i z}$. The Fourier transform in (\ref{Intro3.1}) is a nonnegative function of $t$ and is given explicitly in Lemma \ref{lem3.3}.

The description of the sign changes given by (\ref{Intro3}) is a key point in our argument. It will allow us to apply the techniques of \cite{CV} when we integrate with respect to the parameter $\lambda$. For this, let $\mu$ be a measure defined on the Borel subsets of $(0,\infty)$ such that
\begin{equation}\label{Intro4}
\int_0^{\infty}\frac{\lambda}{\lambda^2 + 1} \, \dmu< \infty.
\end{equation}
It follows from (\ref{Intro4}) that, for $x\not= 0$, the function
\begin{equation*}\label{Intro5}
\lambda\mapsto e^{-\lambda |x|} - e^{-\lambda}
\end{equation*}
is integrable on $(0,\infty)$ with respect to $\mu$. We define then $f_{\mu}:\R\rightarrow \R\cup\{\infty\}$ by
\begin{equation}\label{Intro6}
f_{\mu}(x) = \int_0^{\infty} \big\{e^{-\lambda |x|} - e^{-\lambda}\big\}\, \dmu,
\end{equation}
where 
\begin{equation*}\label{Intro7}
f_{\mu}(0) = \int_0^{\infty} \big\{1 - e^{-\lambda}\}\, \dmu
\end{equation*}
may take the value $\infty$. Using $f_{\mu}$, we define $K_{\mu}:\C\rightarrow\C$ by
\begin{equation}\label{Intro8}
K_{\mu}(z) = \lim_{N\rightarrow\infty} \left(\frac{\cos \pi z}{\pi}\right) 
	\left\{\sum_{n=-N}^{N+1} \frac{(-1)^{n}f_{\mu}(n-\hh)}{(z - n + \hh)} 
		\right\}.
\end{equation}
We will show that the sequence on the right of (\ref{Intro8}) converges uniformly on compact subsets of $\C$ and therefore
defines $K_{\mu}(z)$ as a real entire function.  Then it is easy to check that $K_{\mu}$ interpolates the values of 
$f_{\mu}$ at real numbers $x$ such that $x - \h$ is an integer.  That is, the identity
\begin{equation}\label{Intro9}
K_{\mu}(n - \hh) = f_{\mu}(n - \hh)
\end{equation}
holds for each integer $n$. We will prove that the entire function $K_{\mu}(z)$ satisfies the following extremal property.

\begin{theorem}\label{thm1.2}
Assume that the measure $\mu$ satisfies {\rm (\ref{Intro4})}.  
\begin{itemize}
\item[(i)]  The real entire function $K_{\mu}(z)$ defined by {\rm (\ref{Intro8})} has exponential type at most $\pi$.
\item[(ii)]  For real $x\not= 0$ the function
\begin{equation*}
\lambda \mapsto e^{-\lambda |x|} - K(\lambda, x)
\end{equation*}
is integrable on $(0, \infty)$ with respect to $\mu$. 
\item[(iii)]  For all real $x$ we have
\begin{equation}\label{Intro10}
f_{\mu}(x) - K_{\mu}(x) = \int_0^{\infty} \big\{e^{-\lambda |x|} - K(\lambda, x)\big\}\, \dmu.
\end{equation}
\item[(iv)]  The function $x\mapsto f_{\mu}(x) - K_{\mu}(x)$ is integrable on $\R$, and
\begin{equation}\label{Intro11}
\int_{-\infty}^{\infty} \left|f_{\mu}(x) - K_{\mu}(x)\right|\,\dx
		= \int_0^{\infty}\big\{\tfrac{2}{\lambda} - \tfrac{2}{\lambda}\sech\bigl(\tfrac{\lambda}{2}\bigr)\big\}\, \dmu.
\end{equation}
\item[(v)]  If $t\not= 0$ then
\begin{align}\label{Intro12}
\begin{split}
\int_{-\infty}^{\infty} \big\{f_{\mu}(x) - &K_{\mu}(x)\big\}e(-tx)\, \dx \\
	&= \int_0^{\infty} \frac{2\lambda}{\lambda^2 + 4\pi^2 t^2}\, \dmu
		- \int_0^{\infty} \tK\bigl(\lambda, t\bigr)\, \dmu.
\end{split}
\end{align}
\item[(vi)]  If $\ttK(z)$ is an entire function of exponential type at most $\pi$, then 
\begin{equation}\label{Intro13}
\int_{-\infty}^{\infty} \Big|f_{\mu}(x) - \ttK(x)\Big|\, \dx \geq \int_0^{\infty}\big\{\tfrac{2}{\lambda} - \tfrac{2}{\lambda}\sech\bigl(\tfrac{\lambda}{2}\bigr)\big\}\, \dmu.
\end{equation}
\item[(vii)]  There is equality in the inequality {\rm (\ref{Intro13})} if and only if $\ttK(z) = K_{\mu}(z)$.
\end{itemize}

\end{theorem}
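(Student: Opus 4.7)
\emph{Strategy.} The plan is to view every quantity in the theorem as the $\mu$-integral of the corresponding quantity for the single exponential $e^{-\lambda|x|}$, already settled by Theorem~\ref{thm1.1}, and to apply Fubini's theorem throughout; the measure condition \eqref{Intro4} is precisely what legitimizes all such interchanges.

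For parts (i)--(iii), the first task is to establish the identity
\begin{equation*}
K_{\mu}(z) \;=\; \int_0^{\infty} \bigl(K(\lambda, z) - e^{-\lambda}\bigr)\,\dmu.
\end{equation*}
I would insert $f_{\mu}(n-\hh) = \int(e^{-\lambda|n-1/2|} - e^{-\lambda})\,\dmu$ into the partial sum in \eqref{Intro8}, swap the finite sum with the $\mu$-integration, and then pass $N\to\infty$ under the integral by dominated convergence. The pointwise inner limit equals $K(\lambda,z) - e^{-\lambda}$ via the standard partial-fraction identity $\sum_n(-1)^n/(z-n+\hh) = \pi/\cos\pi z$, and the dominating function on compact $z$-sets has the shape $C_{z}\,\lambda/(\lambda^2+1)$: near $\lambda=0$ a first-order Taylor expansion gives $|K(\lambda,z) - e^{-\lambda}| = O(\lambda)$, while for large $\lambda$ both $K(\lambda,z)$ and $e^{-\lambda}$ decay exponentially and are therefore $O(1/\lambda)$. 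From this representation $K_{\mu}$ is entire of exponential type at most $\pi$ (these properties are inherited from $K(\lambda,\cdot)$ via a uniform-on-compacts bound of the same shape), proving (i). For (ii), Theorem~\ref{thm1.1} gives that $e^{-\lambda|x|} - K(\lambda,x)$ has the $\lambda$-independent sign $\sgn(\cos\pi x)$ and $L^{1}_{x}$-norm $(2/\lambda)(1 - \sech(\lambda/2))$, which behaves like $\lambda/4$ near $0$ and like $2/\lambda$ near $\infty$; the same $O(\lambda/(\lambda^2+1))$ estimates used in (i) secure pointwise integrability in $\lambda$ for each fixed real $x\neq 0$. Part (iii) is then the algebraic rearrangement $(e^{-\lambda|x|} - e^{-\lambda}) - (K(\lambda,x) - e^{-\lambda})$ combined with the two integral representations of $f_\mu$ and $K_\mu$.

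Parts (iv) and (v) are Fubini interchanges that exploit the $\lambda$-independent sign information from Theorem~\ref{thm1.1}(ii). Since the integrand in \eqref{Intro10} carries the fixed sign $\sgn(\cos\pi x)$ for every $\lambda$, so does $f_{\mu} - K_{\mu}$, whence
\begin{equation*}
\int_{-\infty}^{\infty}|f_{\mu} - K_{\mu}|\,\dx \;=\; \int_{-\infty}^{\infty} \sgn(\cos\pi x)(f_{\mu} - K_{\mu})\,\dx \;=\; \int_{0}^{\infty}\!\int_{-\infty}^{\infty}|e^{-\lambda|x|} - K(\lambda,x)|\,\dx\,\dmu,
\end{equation*}
which by Theorem~\ref{thm1.1}(i) equals \eqref{Intro11}. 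Part (v) is the analogous interchange for the now absolutely-integrable integrand $(f_{\mu} - K_{\mu})(x)e(-tx)$, combined with the elementary Fourier transform $\int_{-\infty}^{\infty}(e^{-\lambda|x|} - e^{-\lambda})e(-tx)\,\dx = 2\lambda/(\lambda^2 + 4\pi^2 t^2)$, valid for $t\neq 0$.

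The heart of the argument is (vi)--(vii), which rests on the orthogonality: if $L$ is an integrable entire function of exponential type at most $\pi$, then $\int_{-\infty}^{\infty}\sgn(\cos\pi x)L(x)\,\dx = 0$. To see this, write the integral as $\sum_n (-1)^n \int_{n-1/2}^{n+1/2}L(x)\,\dx$ and apply Poisson summation; the only possibly nonzero spectral contributions come from $\hat L(\pm \hh)$, but $\hat L$ is continuous with $\supp \hat L \subseteq [-\hh,\hh]$, so $\hat L(\pm\hh)=0$. Given $\ttK$ of type $\pi$ with $\int|f_\mu - \ttK|\,\dx < \infty$, the difference $K_\mu - \ttK$ is integrable of type $\pi$ and hence integrates to zero against $\sgn(\cos\pi x)$; combined with the constant sign of $f_\mu - K_\mu$ from (iv) this yields
\begin{equation*}
\int_{-\infty}^{\infty}|f_\mu - \ttK|\,\dx \;\geq\; \int_{-\infty}^{\infty} \sgn(\cos\pi x)(f_\mu - \ttK)\,\dx \;=\; \int_{-\infty}^{\infty}|f_\mu - K_\mu|\,\dx,
\end{equation*}
proving (vi). For (vii), equality forces $\sgn(f_\mu - \ttK) = \sgn(\cos\pi x)$ almost everywhere; the continuity of $f_\mu - \ttK$ then forces this difference to vanish at every half-integer, so $L := K_\mu - \ttK$ is an integrable entire function of type $\pi$ vanishing on $\Z - \hh$. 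Since $L(n-\hh) = \int_{-1/2}^{1/2}\hat L(t)e^{-i\pi t}e^{2\pi i t n}\,\dt$ is the $n$-th Fourier coefficient of $\hat L(t)e^{-i\pi t}$ on $[-\hh,\hh]$, the vanishing of all these coefficients forces $\hat L \equiv 0$ and hence $\ttK = K_\mu$. The main obstacle is really the quantitative bound $|K(\lambda,z) - e^{-\lambda}| \lesssim_z \lambda/(\lambda^2+1)$ on compact $z$-sets that underpins steps (i)--(iii); once that is secured, the extremality in (vi)--(vii) reduces to the Fourier orthogonality above.
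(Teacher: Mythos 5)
Your overall architecture is right, and parts (iv)--(vii) are essentially sound. For (vi)--(vii) you take a slightly different route from the paper --- the paper expands $\sgn(\cos\pi x)$ in its Fourier series and interchanges sum and integral, while you convolve $L$ with $\mathbb{1}_{[-1/2,1/2]}$ and apply Poisson summation, observing that a continuous $\widehat{L}$ supported in $[-\hh,\hh]$ must vanish at $\pm\hh$. Both are faithful implementations of the same orthogonality statement $\int \sgn(\cos\pi x)L(x)\,\dx = 0$, so this difference is cosmetic. Parts (iv)--(v) are correct Fubini arguments and match the paper (your side remark about $\int(e^{-\lambda|x|}-e^{-\lambda})e(-tx)\,\dx$ should of course read $e^{-\lambda|x|}$ alone, since the constant has no $L^1$ Fourier transform; but the intended identity is clear).

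The genuine gap is in (i)--(iii). You propose to prove $K_{\mu}(z) = \int_0^{\infty}\bigl(K(\lambda,z)-e^{-\lambda}\bigr)\,\dmu$ by inserting the $\mu$-integral into the partial sums of \eqref{Intro8} and passing $N\to\infty$ by dominated convergence, with the dominating function $C_z\,\lambda/(\lambda^2+1)$. But dominated convergence requires a $\mu$-integrable bound on the \emph{partial sums}
\begin{equation*}
S_N(\lambda,z) = \Bigl(\frac{\cos\pi z}{\pi}\Bigr)\sum_{n=-N}^{N+1}\frac{(-1)^n\bigl(e^{-\lambda|n-1/2|}-e^{-\lambda}\bigr)}{z-n+\hh},
\end{equation*}
uniformly in $N$, not merely on the limit. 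Your argument only addresses the limit. Establishing the $N$-uniform bound is nontrivial: a naive pairing/summation-by-parts on $S_N$ yields something like $O_z\bigl(\lambda\log(1/\lambda)\bigr)$ near $\lambda=0$, and $\lambda\log(1/\lambda)$ need not be $\mu$-integrable under \eqref{Intro4} (e.g.\ $\dmu = \lambda^{-2}(\log 1/\lambda)^{-2}\,\dl$ near $0$). The extra cancellation that brings the partial sums down to $O(\lambda)$ has to be extracted, and in practice this is exactly what the paper's contour-integral machinery (Lemmas~\ref{lem2.2}--\ref{lem2.3}, identity \eqref{cv40}) is built to do, applied to $\Phi_\lambda(w)=e^{-\lambda w}-e^{-\lambda}$. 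In short, the ``main obstacle'' you flag at the end is real and is not dispatched by the sketch.

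The paper sidesteps this entirely. For (i) it applies Lemmas~\ref{lem2.1}--\ref{lem2.6} to the single analytic function $F_{\mu}(z)=\int_0^\infty(e^{-\lambda z}-e^{-\lambda})\,\dmu$, verifying \eqref{cv10}--\eqref{cv12} (Lemma~\ref{lem4.1}) and concluding $K_\mu = \K(F_\mu,\cdot)$ has type $\pi$, never touching a $\lambda$-by-$\lambda$ bound. For (ii)--(iii) it introduces the truncated finite measures $\nu_n(E)=\int_E(e^{-\lambda/n}-e^{-\lambda n})\,\dmu$, for which the analogue of \eqref{Intro10} holds by Lemmas~\ref{lem3.4}--\ref{lem3.5} and Fubini, and then passes $n\to\infty$: on the left via the contour representation $I(\beta,\cdot;z)$, on the right via \emph{monotone} convergence, which is available precisely because the sign of $e^{-\lambda|x|}-K(\lambda,x)$ is $\lambda$-independent by \eqref{Intro3} and the densities $e^{-\lambda/n}-e^{-\lambda n}$ increase to $1$. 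This replaces the missing dominating function by monotonicity, which is exactly the tool that your DCT-based outline lacks. If you want to keep your more direct route, you will need to either (a) reproduce the vertical-line contour estimates to control $S_N$ uniformly in $N$, or (b) restructure the limit as the paper does so that monotone convergence applies.
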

Theorem \ref{thm1.2} was stated for the best approximation of exponential type at most $\pi$ of $f_{\mu}(x)$. It is often useful to have results of the same sort in which the entire approximations have exponential type at most $\pi\delta$, where $\delta$ is a positive parameter. To accomplish this we introduce a second measure $\nu$ defined on Borel subsets $E\subseteq (0,\infty)$ by
\begin{equation}\label{Intro14}
\nu(E) = \mu(\delta E),
\end{equation}
where
\begin{equation*}
\delta E = \{\delta x: x\in E\}
\end{equation*}
is the dilation of $E$ by $\delta$.  If $\mu$ satisfies (\ref{Intro4}) then $\nu$ also satisfies (\ref{Intro4}), and the two functions 
$f_{\mu}(x)$ and $f_{\nu}(x)$ are related by the identity
\begin{align}\label{Intro15}
\begin{split}
f_{\nu}(x) &= \int_0^{\infty} \big\{e^{-\lambda |x|} - e^{-\lambda}\big\}\, \dnu \\
	   &= \int_0^{\infty} \big\{e^{-\lambda \delta^{-1}|x|} - e^{-\lambda \delta^{-1}}\big\}\, \dmu \\
	   &= \int_0^{\infty} \big\{e^{-\lambda |\delta^{-1}x|} - e^{-\lambda}\big\}\, \dmu
	   		- \int_0^{\infty} \big\{e^{-\lambda \delta^{-1}} - e^{-\lambda}\big\}\, \dmu \\
	   &= f_{\mu}\bigl(\delta^{-1} x\bigr) - f_{\mu}\bigl(\delta^{-1}\bigr).
\end{split}
\end{align}
We apply Theorem \ref{thm1.2} to the functions $f_{\nu}(x)$ and $K_{\nu}(z)$.  Then using (\ref{Intro15}) we obtain
corresponding results for the functions 
\begin{equation*}
f_{\mu}(x) - f_{\mu}\bigl(\delta^{-1}\bigr) = f_{\nu}(\delta x)\quad\text{and}\quad K_{\nu}(\delta z),
\end{equation*}
where the entire function $z\mapsto K_{\nu}(\delta z)$ has exponential type at most $\pi\delta$.  This leads easily to the 
following more general form of Theorem \ref{thm1.2}.

\begin{theorem}\label{thm1.3}  Assume that the measure $\mu$ satisfies  {\rm (\ref{Intro4})}, and let $\nu$ be
the measure defined by {\rm (\ref{Intro14})}, where $\delta$ is a positive parameter.  
\begin{itemize}
\item[(i)]  The real entire function $z\mapsto K_{\nu}(\delta z) + f_{\mu}\bigl(\delta^{-1}\bigr)$ has exponential type at most $\pi\delta$.
\item[(ii)]  For real $x\not= 0$ the function
\begin{equation}\label{Intro16}
\lambda \mapsto e^{-\lambda |x|} - K\bigl(\delta^{-1}\lambda, \delta x\bigr)
\end{equation}
is integrable on $(0, \infty)$ with respect to $\mu$.  
\item[(iii)]  For all real $x$ we have
\begin{align}\label{Intro17}
\begin{split}
f_{\mu}(x) - f_{\mu}\bigl(\delta^{-1}\bigr)&- K_{\nu}(\delta x) \\
	&= \int_0^{\infty} \big\{e^{-\lambda |x|} - K\bigl(\delta^{-1}\lambda, \delta x\bigr)\big\}\, \dmu.
\end{split}
\end{align}
\item[(iv)]  The function $x\mapsto f_{\mu}(x) - f_{\mu}\bigl(\delta^{-1}\bigr) - K_{\nu}(\delta x)$ 
is integrable on $\R$, and
\begin{align}\label{Intro18}
\begin{split}
\int_{-\infty}^{\infty} \big| f_{\mu}(x) - f_{\mu}\bigl(\delta^{-1}\bigr)&- K_{\nu}(\delta x)\big|\, \dx \\
     &= \int_0^{\infty}\big\{\tfrac{2}{\lambda} - \tfrac{2}{\lambda}\,\sech\left(\tfrac{\lambda}{2\delta}\right) \big\}\, \dmu.
\end{split}
\end{align}
\item[(v)]  If $t\not= 0$ then
\begin{align}\label{Intro19}
\begin{split}
\int_{-\infty}^{\infty} \big\{&f_{\mu}(x) - f_{\mu}\bigl(\delta^{-1}\bigr) - K_{\nu}(\delta x)\big\}e(-tx)\, \dx \\
	&= \int_0^{\infty} \frac{2\lambda}{\lambda^2 + 4\pi^2 t^2}\, \dmu
		- \delta^{-1} \int_0^{\infty} \tK\bigl(\delta^{-1}\lambda, \delta^{-1}t\bigr)\, \dmu.
\end{split}
\end{align}
\item[(vi)]  If $\ttK(z)$ is an entire function of exponential type at most $\pi\delta$, then 
\begin{equation}\label{Intro20}
\int_{-\infty}^{\infty} \left|f_{\mu}(x) - \ttK(x)\right|\dx \geq \int_0^{\infty}\big\{\tfrac{2}{\lambda} - \tfrac{2}{\lambda}\,\sech\left(\tfrac{\lambda}{2\delta}\right)\big\} \, \dmu.
\end{equation}
\item[(vii)]  There is equality in the inequality {\rm (\ref{Intro20})} if and only if $\ttK(z) = K_{\nu}(\delta z) + f_{\mu}\bigl(\delta^{-1}\bigr)$.
\end{itemize}
\end{theorem}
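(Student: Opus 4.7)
The plan is to derive Theorem \ref{thm1.3} as a direct corollary of Theorem \ref{thm1.2} applied to the measure $\nu$, using the scaling identity (\ref{Intro15}) and the change of variables $x = \delta y$ to rescale from exponential type $\pi$ to exponential type $\pi\delta$. The key observation is that the pushforward relation $\nu(E) = \mu(\delta E)$ implies a change-of-variables formula
\begin{equation*}
\int_0^\infty F(\lambda)\, \dnu = \int_0^\infty F\bigl(\delta^{-1}\lambda\bigr)\, \dmu
\end{equation*}
for any $\nu$-integrable $F$, and this will convert each of the seven assertions of Theorem \ref{thm1.2} (stated for $\nu$) into the corresponding assertion of Theorem \ref{thm1.3} (stated for $\mu$).

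First, I would dispose of (i) by noting that Theorem \ref{thm1.2}(i) gives $K_\nu$ exponential type $\pi$, hence $z \mapsto K_\nu(\delta z)$ has type $\pi\delta$, and adding the constant $f_\mu(\delta^{-1})$ preserves type. For (ii) and (iii), I would apply Theorem \ref{thm1.2}(ii)--(iii) to $\nu$ evaluated at $\delta x$, obtaining integrability and the pointwise identity for $\lambda \mapsto e^{-\lambda|\delta x|} - K(\lambda,\delta x)$ with respect to $\nu$; the change of variables $\lambda \to \delta^{-1}\lambda$ converts this to the integrand in (\ref{Intro17}), while (\ref{Intro15}) rewrites the left-hand side $f_\nu(\delta x) - K_\nu(\delta x)$ as $f_\mu(x) - f_\mu(\delta^{-1}) - K_\nu(\delta x)$.

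For (iv) and (v), the same substitution $x = \delta y$ in the Lebesgue integrals produces a factor of $\delta$ which cancels against the factor $\delta$ arising from the change of variables on the right-hand side of (\ref{Intro11}); in (v), one additionally uses the fact that $e(-t \cdot \delta y) = e(-\delta t \cdot y)$, so a substitution of the Fourier variable $t' = \delta t$ accounts for the appearance of $\delta^{-1}t$ in $\tK(\delta^{-1}\lambda, \delta^{-1}t)$ and the factor $\delta^{-1}$ in front of the Fourier transform term in (\ref{Intro19}). Finally, for (vi)--(vii), I would observe that every entire function $\ttK(z)$ of exponential type $\pi\delta$ is of the form $L(\delta z) + f_\mu(\delta^{-1})$ for the unique $L(z) = \ttK(\delta^{-1}z) - f_\mu(\delta^{-1})$ of exponential type $\pi$, so that applying Theorem \ref{thm1.2}(vi)--(vii) to $L$ and rescaling yields (\ref{Intro20}) together with the characterization of the equality case.

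No step here should present any real obstacle: the entire argument is bookkeeping, and the only place where one must be slightly careful is tracking the scaling factors of $\delta$ in (v), where both the spatial substitution $x = \delta y$ and the change in the Fourier variable contribute. The genuine content of the theorem is already contained in Theorem \ref{thm1.2}, and the proof amounts to verifying that the dilation $E \mapsto \delta E$ intertwines correctly with the Fourier transform, the $L^1$ norm, and the interpolation data defining $K_\mu$ and $K_\nu$.
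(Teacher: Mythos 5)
Your proposal is correct and matches the paper's own approach: the authors likewise derive Theorem \ref{thm1.3} by applying Theorem \ref{thm1.2} to the measure $\nu$, invoking the identity (\ref{Intro15}), and rescaling, stating only that this ``leads easily'' to the result. The change-of-variables bookkeeping you carry out, including the factors of $\delta$ in parts (iv) and (v), is exactly what is being suppressed in the paper's brief remark.
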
  

To illustrate how these results can be applied, we consider the problem of approximating the function 
$x\mapsto \log |x|$ by an entire function $z\mapsto V(z)$ of exponential type at most $\pi$. We select $\mu$ to be a Haar measure on the multiplicative group $(0, \infty)$, so that 
\begin{equation}\label{Intro21}
\mu(E) = \int_E \lambda^{-1}\ \dl
\end{equation}
for all Borel subsets $E \subseteq(0,\infty)$.  For this measure $\mu$ we find that
\begin{equation*}\label{Intro22}
f_{\mu}(x) = - \log |x|.
\end{equation*}
We apply Theorem \ref{thm1.2} with $V(z) = - K_{\mu}(z)$, that is
\begin{align}\label{Intro23}
\begin{split}
V(z) = \lim_{N\rightarrow\infty} \left(\frac{\cos \pi z}{\pi}\right)
			\Bigg\{\sum_{n=-N}^{N+1}&\frac{(-1)^{n}\log |n-\hh|}{(z - n + \hh)} \Bigg\},
\end{split}
\end{align}
where the limit converges uniformly on compact subsets of $\C$.  From Theorem \ref{thm1.2} we conclude that 
$V(z)$ is the best approximation of exponential type at most $\pi$ for $\log|x|$ with 
\begin{equation}\label{Intro24}
\int_{-\infty}^{\infty} \left| \log |x|- V(x)\right| \dx = \frac{4}{\pi}\sum_{n=0}^{\infty}\frac{(-1)^{n}}{(2n+1)^2} = \frac{4G}{\pi}\,,
\end{equation}
where $G = 0.915965594...$ is the Catalan's constant. This follows from (\ref{Intro11}) and standard contour integration.\\

In a similar manner, Theorem \ref{thm1.3} can be applied to determine the entire function
of exponential type at most $\pi\delta$ that best approximates $x\mapsto \log |x|$.  Alternatively, the functional
equation for the logarithm allows us to accomplish this directly.  Clearly the entire function
\begin{equation*}
z\mapsto -\log \delta + V(\delta z)
\end{equation*}
has exponential type at most $\pi\delta$ and is the best approximation to $x\mapsto \log |x|$ on $\R$, satisfying
\begin{equation}\label{Intro25}
\int_{-\infty}^{\infty} \big|\log|x| +\log \delta - V(\delta x) \big|\,\dx = \frac{4G}{\pi\delta}.
\end{equation}
Another interesting application of Theorem \ref{thm1.2} arises when we choose measures $\mu_{\sigma}$ such that 
\begin{equation}\label{Intro26}
\mu_{\sigma}(E) = \int_E \lambda^{-\sigma}\ \dl,
\end{equation}
for all Borel subsets $E\subseteq (0,\infty)$.  For $0 < \sigma < 2$ the measure $\mu_{\sigma}$ satisfies the 
condition (\ref{Intro4}). We find that
\begin{align}\label{Intro27}
\begin{split}
f_{\mu_{\sigma}}(x) & = \int_0^{\infty} \big\{e^{-\lambda |x|} - e^{-\lambda}\big\}\ \lambda^{-\sigma}\ \dl \\
                    & = \Gamma(1 - \sigma)\big\{|x|^{\sigma - 1} - 1\big\}\, ,\quad\text{if $\sigma\not= 1$.}
\end{split}
\end{align}
Therefore, if we want to find the best approximation of exponential type at most $\pi$ for the even function $x \mapsto |x|^{\sigma-1}$ where $0< \sigma < 2$ and $\sigma \neq 1$, we should consider
\begin{equation*}
V_{\sigma}(z) = \frac{K_{\mu_{\sigma}}(z)}{\Gamma(1-\sigma)} + 1 = \lim_{N\rightarrow\infty} \left(\frac{\cos \pi z}{\pi}\right)
			\Bigg\{\sum_{n=-N}^{N+1}\frac{(-1)^{n}|n-\hh|^{\sigma-1}}{(z - n + \hh)} \Bigg\}.
\end{equation*}
From (\ref{Intro11}) and contour integration we conclude that 
\begin{align}
\begin{split}
\int_{-\infty}^{\infty} \left| |x|^{\sigma-1} - V_{\sigma}(x) \right|\, \dx &= \frac{1}{\Gamma(1 - \sigma)}\int_0^{\infty}\big\{\tfrac{2}{\lambda} - \tfrac{2}{\lambda}\sech\bigl(\tfrac{\lambda}{2}\bigr)\big\}\ \lambda^{-\sigma}\ \dl \\
& = \frac{1}{\Gamma(1 - \sigma)}\ \frac{4}{\sin\left(\frac{\pi \sigma}{2}\right)\pi^{\sigma}}\  \sum_{n = 0}^{\infty}\frac{(-1)^n}{(2n+1)^{1+\sigma}}.
\end{split}
\end{align}
Our results can also be used to approximate certain real valued periodic functions by trigonometric 
polynomials.  This is accomplished by applying the Poisson summation formula to the functions $x \mapsto e^{-\lambda|x|}$ and $x \mapsto K(\lambda,x)$, and then integrating the parameter $\lambda$ with respect to a measure $\mu$.
We give a general account of this method in section 6. An interesting special case of Theorem \ref{thm6.2} occurs when we consider $\mu$ to be the Haar measure defined in (\ref{Intro21}). In this case, we obtain the trigonometric polynomial of degree $N$ that best approximates in $L^1(\R/\Z)$ the function $x \mapsto \log|1 - e(x)|$. Here is the precise result.

\begin{theorem}
 Let $N$ be a nonnegative integer. There exists a real valued trigonometric polynomial
\begin{equation}
 v_N(x) = \sum_{n=-N}^{N} \widehat{v}_N(n)e(nx)
\end{equation}
that is the best approximation in $L^1(\R/\Z)$ for the function $x \mapsto \log|1 - e(x)|$. Precisely, if $\wv(x)$ is a trigonometric polynomial of degree at most $N$, we have
\begin{equation}
 \int_{\R/\Z} \left| \log|1 - e(x)| - \wv(x) \right|\, \dx \geq \frac{4G}{(2N+2)\pi}\,,
\end{equation}
with equality if and only if $\wv(x) = v_N(x)$. Here $G = 0.915965594...$ is the Catalan's constant.
\end{theorem}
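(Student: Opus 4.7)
My plan is to obtain the theorem as the periodic analog of the $L^1(\R)$ bound (\ref{Intro25}), by applying Poisson summation to the extremal configuration for $\log|x|$ and the Haar measure $\mu$ defined by~(\ref{Intro21}). Set $\delta=2N+2$. Since $\dmu = \dl/\lambda$ is dilation invariant, one has $\nu=\mu$ in Theorem~\ref{thm1.3}, and that theorem produces the best $L^1(\R)$ approximation $z\mapsto K_{\mu}(\delta z)+\log\delta$ of type $\pi(2N+2)$ to $f_{\mu}(x)=-\log|x|$, with error $4G/((2N+2)\pi)$ by~(\ref{Intro25}).

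\textbf{Construction of $v_N$.} For each $\lambda>0$ the function $x\mapsto K(\delta^{-1}\lambda,\delta x)$ lies in $L^1(\R)$ and its Fourier transform $\delta^{-1}\tK(\delta^{-1}\lambda,t/\delta)$ is continuous with support in $[-(N+1),N+1]$, hence vanishes at $\pm(N+1)$. Poisson summation therefore periodizes $K(\delta^{-1}\lambda,\delta\cdot)$ to a real trigonometric polynomial of degree at most $N$, while $\sum_k e^{-\lambda|x+k|}=\sum_n\frac{2\lambda}{\lambda^2+4\pi^2n^2}e(nx)$. Put $F(x)=-\log|x|-\log\delta-K_{\mu}(\delta x)$; by Theorem~\ref{thm1.3}(iv) we have $F\in L^1(\R)$, so its $1$-periodization $P(x)=\sum_{k\in\Z}F(x+k)$ lies in $L^1(\R/\Z)$. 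Specializing~(\ref{Intro19}) to integer $t=n$ with $|n|\ge N+1$ kills the second term, so $\widehat P(n)=\widehat F(n)=1/(2|n|)$. Comparing with $-\log|1-e(x)|=\sum_{n\ne0}e(nx)/(2|n|)$, the Fourier coefficients of $P(x)+\log|1-e(x)|$ vanish for $|n|\ge N+1$, so this sum equals a trigonometric polynomial of degree at most $N$. I \emph{define} $v_N(x)$ by $\log|1-e(x)|-v_N(x)=-P(x)$.

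\textbf{Sharp norm and uniqueness.} By Theorem~\ref{thm1.1}(ii) the difference $e^{-\lambda|x|}-K(\delta^{-1}\lambda,\delta x)$ has sign $\sgn(\cos\pi\delta x)$ independently of $\lambda$, so (\ref{Intro17}) gives $F(x)$ the same sign. Because $\delta=2N+2$ is even, this sign function is $1$-periodic in $x$, so all translates $F(x+k)$ share the same sign and $|P(x)|=\sum_k|F(x+k)|$. Therefore, using~(\ref{Intro25}),
\begin{equation*}
\int_{\R/\Z}\bigl|\log|1-e(x)|-v_N(x)\bigr|\,\dx=\int_{-\infty}^{\infty}|F(x)|\,\dx=\frac{4G}{(2N+2)\pi}.
\end{equation*}
For the matching lower bound set $\psi(x)=-\sgn(\cos 2\pi(N+1)x)$. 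Its Fourier expansion is supported on the odd multiples of $N+1$, so $\psi$ is $L^2(\R/\Z)$-orthogonal to every trigonometric polynomial of degree at most $N$; hence for any such $\wv$,
\begin{equation*}
\int_{\R/\Z}\bigl|\log|1-e(x)|-\wv(x)\bigr|\,\dx\ge\int_{\R/\Z}\log|1-e(x)|\,\psi(x)\,\dx,
\end{equation*}
and pairing $\log|1-e(x)|=-\sum_{n\ge1}\cos(2\pi nx)/n$ with $\psi$ term-by-term evaluates the right side to $4G/((2N+2)\pi)$. Equality forces $\sgn(\log|1-e|-\wv)=\psi$ almost everywhere, whence $\wv$ must interpolate $\log|1-e(x)|$ at each of the $2N+2$ sign-changes of $\psi$ in $[0,1)$; since a nonzero trigonometric polynomial of degree at most $N$ has at most $2N$ zeros per period, this determines $\wv=v_N$ uniquely.

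\textbf{Main obstacle.} The delicate point is the Fourier identification for $P$: the interchange of Poisson summation with integration against the (non-finite) Haar measure $\mu$ must be justified via Fubini, using the integrability assertions of Theorem~\ref{thm1.3}(ii)--(iv). Once this is in place, the remainder reduces cleanly to Theorem~\ref{thm1.3} and standard $L^1/L^\infty$ duality.
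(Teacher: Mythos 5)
Your proof is correct, and the ingredients are the same as the paper's (Poisson summation, the pointwise sign description from Theorem~\ref{thm1.1}(ii), duality against $\sgn(\cos\pi\delta x)$, and interpolation at $2N+2$ points for uniqueness), but the route is genuinely reorganized. The paper develops general periodic machinery: it periodizes $e^{-\lambda|\cdot|}-K(\lambda,\cdot)$ for each fixed $\lambda$ to get Theorem~\ref{thm6.1}, integrates against $\mu$ to get Theorem~\ref{thm6.2}, and then specializes $\mu$ to Haar measure, identifying $q_{\mu}(x)=-\log|1-e(x)|$ via its Fourier coefficients and computing the constant from {\rm(\ref{Intro24})} by the change of variable $\lambda\mapsto\delta\lambda$. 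You instead integrate first (getting the $L^1(\R)$ extremal $F$ of Theorem~\ref{thm1.3} for $\log|x|$ of type $\pi(2N+2)$) and periodize second, using that $\sgn(\cos\pi\delta x)$ is $1$-periodic because $\delta=2N+2$ is even to pass the $L^1$ norm through the periodization without loss. This is a cleaner path to the specific theorem, at the cost of bypassing the reusable statements in Section~6; it buys a direct identification of the error $-P$ without having to compute $\uk_{\mu}(N;n)$ explicitly. One remark: the ``main obstacle'' you flag at the end is not actually an issue — the Fubini interchange you worry about is precisely the content of Theorem~\ref{thm1.3}(v), i.e.\ equation {\rm(\ref{Intro19})}, which you already cite, together with the vanishing of $\tK(\lambda',\pm\hh)$ from {\rm(\ref{finite5})} to kill the endpoint frequencies $|n|=N+1$; no further justification is needed. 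A small point of rigor worth making explicit: equality in the duality bound gives $g\psi=|g|$ a.e., hence $g$ has sign $\psi$ where $g\neq 0$; combined with continuity of $g=\log|1-e(\cdot)|-\wv$ away from $0$ (and the sign-change points of $\psi$ are never integers), this forces $g$ to vanish at all $2N+2$ sign changes, which is what the zero-counting argument needs.
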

The trigonometric polynomial $v_N(x)$ is explicitly described in section 6, equations (\ref{ef60})-(\ref{ef62}). With the notation of section 6 we have $v_N(x) = - k_{\mu}(N;x)$, for this particular measure $\mu$.

\section{Proof of Theorem \ref{thm1.1}}

By  performing a change of variables, it suffices to prove (\ref{Intro2}) for $\delta = 1$ and all $\lambda >0$. We start by defining the following entire function of exponential type $\pi$
\begin{equation*}
A(\lambda, z) = \left( \frac{\sin \pi z}{\pi} \right)\sum_{n=0}^{\infty} (-1)^n\frac{ e^{-\lambda n}}{(z-n)}.
\end{equation*}
We also define the function $B:\R \to \R$ by
\begin{equation*}
B(w) = - \frac{e^w}{e^w + 1}.
\end{equation*}
\begin{lemma}
If $\Re(z) < 0$ we have
\begin{equation}\label{Sec2.1}
A(\lambda, z) = \left( \frac{\sin \pi z}{\pi} \right) \int_0^{\infty} B(\lambda + w) \, e^{z w} \, \dw\, ,
\end{equation}
and if $\Re(z) >0$ we have
\begin{equation}\label{S2.1.1}
A(\lambda, z) = e^{-\lambda z} - \left( \frac{\sin \pi z}{\pi} \right) \int_{-\infty}^{0} B(\lambda + w) \, e^{z w} \, \dw.
\end{equation}
\end{lemma}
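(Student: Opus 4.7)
Both representations can be derived by expanding $B$ as a geometric series and recovering the defining series of $A(\lambda,z)$, handling the two half-lines separately. For the first identity, note that $w\ge 0$ and $\lambda>0$ force $\lambda+w>0$, so
\[
B(\lambda+w)=-\frac{1}{1+e^{-(\lambda+w)}}=-\sum_{n=0}^{\infty}(-1)^{n}e^{-n(\lambda+w)}.
\]
When $\Re(z)<0$, the resulting double sum/integral
\[
\sum_{n=0}^{\infty}(-1)^{n}e^{-n\lambda}\int_{0}^{\infty}e^{(z-n)w}\,\dw
\]
is absolutely summable (the $n$-th term is dominated in modulus by $e^{-n\lambda}/(n-\Re(z))$, which is summable in $n$), so Fubini--Tonelli justifies the interchange. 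Each inner integral evaluates to $(n-z)^{-1}$, and multiplication by $\sin(\pi z)/\pi$ reproduces exactly the series defining $A(\lambda,z)$, yielding (\ref{Sec2.1}).

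For the second identity the same expansion does not work uniformly on $(-\infty,0)$, because $\lambda+w$ changes sign at $w=-\lambda$ and the geometric expansion of $B$ switches form there. I would instead proceed by analytic continuation. Since $B(\lambda+w)\sim -e^{\lambda+w}$ as $w\to-\infty$, the integral $\int_{-\infty}^{0}B(\lambda+w)e^{zw}\,\dw$ converges absolutely and is holomorphic in $z$ on the half-plane $\Re(z)>-1$. In the strip $-1<\Re(z)<0$ one may add this integral to the one in (\ref{Sec2.1}) and combine the two half-lines into an integral over all of $\R$. The substitution $u=\lambda+w$ together with the classical Mellin identity
\[
\int_{-\infty}^{\infty}\frac{e^{(z+1)u}}{e^{u}+1}\,\du=\frac{\pi}{\sin\pi(z+1)}=-\frac{\pi}{\sin\pi z},\qquad -1<\Re(z)<0,
\]
then collapses the full-line integral to $\pi e^{-\lambda z}/\sin\pi z$. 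After multiplication by $\sin(\pi z)/\pi$ and rearrangement, this is precisely (\ref{S2.1.1}) in the strip. Since both sides of (\ref{S2.1.1}) are holomorphic on $\{\Re(z)>-1\}$, analytic continuation extends the identity to the full half-plane $\Re(z)>0$.

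The only nontrivial ingredients are the interchange of sum and integral (handled by absolute convergence) and the Mellin transform of $1/(e^{u}+1)$, which is equivalent to the reflection formula for $\Gamma$ and can be obtained, for example, by the change of variables $t=e^{u}$. I do not anticipate a serious obstacle beyond these classical facts; the essence of the argument is that the splitting $\int_{-\infty}^{\infty}=\int_{-\infty}^{0}+\int_{0}^{\infty}$ decomposes the known Mellin integral into the pieces required for the two halves of the lemma.
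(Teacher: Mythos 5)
Your proposal is correct, and for the first identity it is essentially the same argument as in the paper: expand $B$ as a geometric series, interchange sum and integral by dominated/absolute convergence, and recover the series defining $A(\lambda,z)$. (One small bookkeeping slip: expanding $B(\lambda+w)=-\sum_{n\ge 0}(-1)^n e^{-n(\lambda+w)}$ produces an overall minus sign that is missing from your displayed double sum, but it cancels against your evaluation of the inner integral as $(n-z)^{-1}$ rather than $(z-n)^{-1}$, so the final assertion is right.)

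For the second identity your route is genuinely different from the paper's. The paper stays with $\Re(z)>0$ and splits the integral as
\[
\int_{-\infty}^0 B(\lambda+w)e^{zw}\,\dw
= e^{-\lambda z}\Bigl\{\int_{-\infty}^0 B(w)e^{zw}\,\dw + \int_0^{\lambda} B(w)e^{zw}\,\dw\Bigr\},
\]
expanding $B$ into (different) geometric series on each piece, integrating term by term, and finishing with the partial-fraction identity $\pi/\sin\pi z=\sum_{n\in\Z}(-1)^n/(z-n)$. You instead work first on the strip $-1<\Re(z)<0$, where both half-line integrals converge, add them to produce the full-line integral, identify the latter with the classical Mellin transform of $1/(e^u+1)$, and then push the resulting identity to $\Re(z)>-1$ (hence to $\Re(z)>0$) by analytic continuation. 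Both arguments rest on the same two ingredients — the geometric expansion of $B$ and the cosecant partial-fraction/reflection identity, the latter being equivalent to the Mellin transform you quote — but your arrangement avoids the explicit case-split over $(-\infty,0)$ and $(0,\lambda)$ and derives the second identity as a corollary of the first. This is arguably tidier; the price is that you must verify separately that $\int_{-\infty}^0 B(\lambda+w)e^{zw}\,\dw$ is holomorphic on $\Re(z)>-1$, which you do correctly from the asymptotics $B(\lambda+w)\sim -e^{\lambda+w}$ as $w\to-\infty$. It is worth noting that the paper also invokes this analytic-continuation observation immediately after the lemma (to extend \eqref{S2.1.1} to $\Re(z)>-1$), so you have in effect moved that step to the front of the argument and used it to replace a computation rather than to supplement one.
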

\begin{proof}
Let $\rho>0$. If $\Re(z) \leq -\rho$, then
\begin{eqnarray*}
\int_0^{\infty} B(\lambda + w) \, e^{z w} \, \dw  &=&  e^{-\lambda z}\int_{\lambda}^{\infty} B(w) e^{zw} \dw \\
 &=&  e^{-\lambda z}\int_{\lambda}^{\infty} \sum_{n=0}^{\infty} (-1)^{n+1}e^{(z-n)w} \dw .
\end{eqnarray*}
Now
\begin{equation*}
\left|\sum_{n=0}^{\infty} (-1)^{n+1}e^{(z-n)w}\right| \leq \sum_{n=0}^{\infty} e^{-\rho w - n w} = \left(\frac{e^w}{e^w - 1}\right)e^{-\rho w}\,,
\end{equation*}
so by the dominated convergence theorem we have
\begin{eqnarray*}
\int_0^{\infty} B(\lambda + w) \, e^{z w} \, \dw  &=&  e^{-\lambda z} \sum_{n=0}^{\infty} (-1)^{n+1} \int_{\lambda}^{\infty}e^{(z-n)w}\ \dw \\
 &=&  \sum_{n=0}^{\infty} (-1)^{n} \frac{e^{-\lambda n}}{(z-n)}
\end{eqnarray*}
and this proves (\ref{Sec2.1}).\\

Suppose now that $\Re(z) \geq \rho >0$. Then
\begin{equation}\label{S2.1}
\int_{-\infty}^{0} B(\lambda + w) \, e^{z w} \, \dw = e^{-\lambda z} \left\{ \int_{-\infty}^0 B(w) e^{zw} \dw + \int_0^{\lambda} B(w) e^{zw} \dw \right\}.
\end{equation}
The first of these integrals is equal to 
\begin{eqnarray}\label{S2.2}
\int_{-\infty}^0 B(w) e^{zw} \dw &=& \int_{-\infty}^0 \left( \sum_{n=1}^{\infty} -e^{(2n-1)w} + e^{2n w} \right) e^{zw} \dw.
\end{eqnarray}
For $w<0$ we have
\begin{equation*}
\left|\left(\sum_{n=1}^{\infty} -e^{(2n-1)w} + e^{2n w} \right) e^{zw}\right| \leq -B(w)e^{\rho w}\, ,
\end{equation*}
and therefore we can use the dominated convergence theorem to conclude that (\ref{S2.2}) is equal to 
\begin{align}\label{S2.2.1}
\begin{split}
\int_{-\infty}^0 B(w) e^{zw} \dw &= \sum_{n=1}^{\infty}\left( \int_{-\infty}^0 -e^{(z + 2n-1)w} + e^{(z + 2n) w}\ \dw \right) \\
& = \sum_{n=1}^{\infty} \left( -\frac{1}{(z+2n-1)} + \frac{1}{(z+2n)} \right) = \sum_{n=1}^{\infty} \frac{(-1)^n}{(z+n)}.
\end{split}
\end{align}
Analogously, for the second integral in (\ref{S2.1}) we have
\begin{align}\label{S2.3}
\begin{split}
\int_0^{\lambda}  B(w)\, e^{zw}\,  \dw  &= \int_0^{\lambda} \left( \sum_{n=0}^{\infty} -e^{-2n w} + e^{-(2n+1) w} \right) e^{zw}\  \dw \\
& =  \sum_{n=0}^{\infty} \left(\int_0^{\lambda} -e^{(z -2n)w} + e^{(z - 2n -1) w}\ \dw \right) \\
& =  \sum_{n=0}^{\infty} \frac{(-1)^n}{(z-n)} + \sum_{n=0}^{\infty} (-1)^{n+1}\frac{e^{(z-n)\lambda}}{(z-n)}.
\end{split}
\end{align}
Putting together (\ref{S2.2.1}) and (\ref{S2.3}) in expression (\ref{S2.1}), and using the identity
\begin{equation*}
\frac{\pi}{\sin \pi z} = \sum_{n \in \Z} \frac{(-1)^n}{(z-n)}\,,
\end{equation*}
we conclude the proof of (\ref{S2.1.1}).
\end{proof}

We now proceed to the proof of (\ref{Intro3}). As the function $x \mapsto K(\lambda,x)$ is even, it suffices to prove (\ref{Intro3}) assuming $x\geq 0 $. We first observe that 
\begin{equation}\label{S2.4}
K(\lambda, z) = e^{-\tfrac{\lambda}{2}} \left\{ A\left(\lambda, z - \tfrac{1}{2} \right) + A\left(\lambda, -z - \tfrac{1}{2} \right) \right\}.
\end{equation}
Note that the right-hand side of (\ref{S2.1.1}) defines an analytic function for $\Re(z) > -1$, and this implies that (\ref{S2.1.1}) is true for $\Re(z) > -1$ by analytic continuation. If $x\geq 0$, then $x-\h > -1$ and equation (\ref{S2.1.1}) gives us
\begin{equation}\label{S2.5}
A\left(\lambda, x - \tfrac{1}{2} \right) = e^{-\lambda \left(x - \tfrac{1}{2}\right)} + \left(\frac{\cos \pi x}{\pi}\right) \int_{-\infty}^0 B(\lambda + w) e^{xw - w/2}\  \dw ,
\end{equation}
and as we have $-x-\h <0$, equation (\ref{Sec2.1}) gives us
\begin{equation}\label{S2.6}
A\left(\lambda, -x - \tfrac{1}{2} \right) =  - \left(\frac{\cos \pi x}{\pi}\right) \int_{0}^{\infty} B(\lambda + w) e^{-xw - w/2}\  \dw.
\end{equation}
We define the function $C(w) = B(w)e^{-w/2}$, and use (\ref{S2.5}) and (\ref{S2.6}) in expression (\ref{S2.4}) to obtain
\begin{equation}\label{S2.7}
e^{-\lambda x} - K(\lambda ,x) = \left(\frac{\cos \pi x}{\pi}\right)  \int_0^{\infty} \bigl\{ C(\lambda + w)  - C(\lambda - w) \bigr\} e^{-x w}\  \dw.
\end{equation}
Now it is just a matter of observing that 
$$ C(w) = - \frac{1}{e^{w/2} + e^{-w/2}}$$
is an even function, which is strictly increasing for $w>0$. Therefore, for $\lambda >0$ and $w>0$, we have
$$C(\lambda - w) = C(|\lambda - w|) < C(\lambda + w)\,,$$
and the integral in (\ref{S2.7}) above is strictly positive. This proves that the sign of $e^{-\lambda |x|} - K(\lambda ,x)$ is the same as the sign of $\cos \pi x$, which is part (ii) of Theorem \ref{thm1.1}.\\

To prove part (i), we first verify that $x \mapsto e^{-\lambda |x|} - K(\lambda ,x)$ is integrable. In fact,
\begin{align}\label{S2.8}
\begin{split}
\int_{-\infty}^{\infty} \big|e^{-\lambda |x|} & - K(\lambda ,x)\big|\,\dx  \,= \, 2 \int_{0}^{\infty} \big|e^{-\lambda x} - K(\lambda ,x)\big|\,\dx \\
& =  2 \int_0^{\infty} \left|\frac{\cos \pi x}{\pi}\right| \int_0^{\infty} \bigl\{ C(\lambda + w)  - C(\lambda - w) \bigr\} e^{-x w} \dw \ \dx \\
& =  \int_0^{\infty} \bigl\{ C(\lambda + w)  - C(\lambda - w) \bigr\} \int_0^{\infty} \left|\frac{\cos \pi x}{\pi}\right| e^{-xw} \dx \ \dw\\
& \leq  \int_0^{\infty} \frac{C(\lambda + w)  - C(\lambda - w)}{\pi w} \, \dw < \infty.
\end{split}
\end{align}
Let $\ttK(z)$ be a function of exponential type at most $\pi$ such that $x \mapsto e^{-\lambda|x|} - \ttK(x) $ is integrable. This implies that $\ttK(x)$ is integrable. From a classical result of Polya and Plancherel (see equation (\ref{ef1}) in section 6), the function $\ttK(x)$ is bounded on $\R$. We write
$$\psi(x) = e^{-\lambda |x|}  - \ttK(x).$$
From Paley-Wiener theorem, the Fourier transform of $\ttK(x)$ is supported on the interval $[-\h,\h]$. Therefore,
\begin{equation}\label{S2.9}
\widehat{\psi}(t) = \frac{2\lambda}{\lambda^2 + 4\pi^2 t^2} \ \ \ \textrm{for} \ \ \ |t| \geq \h.
\end{equation}
The function $\sgn(\cos \pi x)$ has period $2$ and Fourier series expansion
\begin{equation}\label{S2.10}
\sgn(\cos \pi x) = \frac{2}{\pi}\sum_{k=-\infty}^{\infty} \frac{(-1)^k}{(2k+1)}\ e\left((k + \hh) x \right).
\end{equation}
As $\sgn(\cos \pi x)$ is a normalized function of bounded variation on $[0,2]$, this Fourier expansion converges at every point $x$ and the partial sums are uniformly bounded. Using (\ref{S2.9}) and (\ref{S2.10}) we obtain the lower bound

\begin{align}\label{S2.11}
\begin{split}
\int_{-\infty}^{\infty} \left| e^{-\lambda |x|}  - \ttK(x) \right| \dx  & \geq \left| \int_{-\infty}^{\infty} \psi(x) \sgn(\cos \pi x)\, \dx \right|\\
& = \left| \frac{2}{\pi}\sum_{k=-\infty}^{\infty} \frac{(-1)^k}{(2k+1)}\, \int_{-\infty}^{\infty} \psi(x) e\left((k + \hh) x \right) \dx \right|\\
& = \left| \frac{2}{\pi}\sum_{k=-\infty}^{\infty} \frac{(-1)^k}{(2k+1)}\ \widehat{\psi}\left(-(k + \hh)\right) \right|\\
& = \left| \frac{2}{\pi}\sum_{k=-\infty}^{\infty} \frac{(-1)^k}{(2k+1)}\ \frac{2\lambda}{\left(\lambda^2 + 4\pi^2(k+\hh)^2\right)} \right|\\
& = \frac{2}{\lambda} - \frac{2}{\lambda}\,\sech\left(\frac{\lambda}{2}\right).
\end{split}
\end{align}
The last sum in (\ref{S2.11}) can be calculated by integrating the meromorphic function
\begin{equation*}
H(z) = \frac{1}{z \cos \pi z}\left(\frac{2\lambda}{\lambda^2 + 4 \pi^2 z^2 }\right)
\end{equation*}
along the positively oriented square contour connecting the vertices $-N-Ni$, $N-Ni$, $N+Ni$ and $-N +Ni$, where $N$ is a natural number with $N\to \infty$.\\

From part (ii) of Theorem \ref{thm1.1} that we already proved, it is clear that equality occurs in (\ref{S2.11}) if $\ttK(z) = K(\lambda,z)$. On the other hand, if we assume that there is equality in (\ref{S2.11}) then $\psi(x) \sgn(\cos \pi x)$ does not change sign (both in its real and imaginary parts). Since $\ttK(x)$ is continuous, we deduce that 
$$\ttK(n-\hh) = e^{-\lambda\left|n - \hh\right|}$$
for all $n \in \Z$. From classical interpolation formulas (see \cite[vol II, p.275]{Z} or \cite[p.187]{V}) we conclude that 
$$\ttK(z) = K(\lambda,z) + \beta \cos(\pi z)$$
for some constant $\beta$. But we have seen that $\ttK(x)$ and $K(\lambda,x)$ are integrable, thus $\beta = 0$. This concludes the proof of Theorem \ref{thm1.1}.

\section{Growth estimates in the complex plane}
Let $\rr = \{z\in\C: 0<\Re(z)\}$ denote the open right half plane.  Throughout this section we work with a function 
$\Phi(z)$ that is analytic on $\rr$ and satisfies the following conditions: If $0 < a < b < \infty$ then
\begin{equation}\label{cv10}
\lim_{y\rightarrow \pm\infty} e^{-\pi |y|} \int_a^b \left|\frac{\Phi(x+iy)}{x+iy}\right|\, \dx = 0,
\end{equation}
if $0 < \eta < \infty$ then
\begin{equation}\label{cv11}
\sup_{\eta \le x} \int_{-\infty}^{\infty} \left|\frac{\Phi(x+iy)}{x+iy}\right| e^{-\pi |y|}\ \dy < \infty,
\end{equation}
and
\begin{equation}\label{cv12}
\lim_{x\rightarrow \infty} \int_{-\infty}^{\infty} \left|\frac{\Phi(x+iy)}{x+iy}\right| e^{-\pi |y|}\ \dy = 0.
\end{equation}

\begin{lemma}\label{lem2.1}  Assume that the analytic function $\Phi:\rr\rightarrow \C$ satisfies the 
conditions {\rm (\ref{cv10})}, {\rm (\ref{cv11})}, and {\rm (\ref{cv12})}, and let $0 < \delta$.  Then there 
exists a positive number $c(\delta,\Phi)$, depending only on $\delta$ and $\Phi$, such that the inequality
\begin{equation}\label{cv13}
|\Phi(z)| \le c(\delta,\Phi) |z| e^{\pi |y|}
\end{equation}
holds for all $z = x+iy$ in the closed half plane $\{z\in\C: \delta\le \Re(z)\}$.
\end{lemma}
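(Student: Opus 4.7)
The plan is to combine the sub-mean-value property of the subharmonic function $|\Phi(w)/w|$ over a small disk around $z_0$ with the weighted $L^1$ bound (\ref{cv11}) along vertical lines.

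Fix $z_0 = x_0 + iy_0$ with $x_0 \ge \delta$. Since $x_0 \ge \delta$, the closed disk $\overline{D(z_0, \delta/2)}$ lies in the half-plane $\{\Re w \ge \delta/2\}$, which is contained in $\rr$ and avoids the point $w = 0$. Hence $w \mapsto \Phi(w)/w$ is analytic on this disk, $|\Phi(w)/w|$ is subharmonic there, and the sub-mean-value inequality (obtained by integrating the circular mean-value inequality against $r\,\mathrm{d}r$) yields
\[
\left|\frac{\Phi(z_0)}{z_0}\right| \le \frac{4}{\pi \delta^2} \iint_{D(z_0, \delta/2)} \left|\frac{\Phi(w)}{w}\right|\,\mathrm{d}A(w).
\]

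Next, I enlarge the disk to the enclosing square $[x_0 - \delta/2, x_0 + \delta/2] \times [y_0 - \delta/2, y_0 + \delta/2]$ and apply Fubini. For any $w = x + iy$ in this square one has $|y| \le |y_0| + \delta/2$, so multiplying and dividing the integrand by $e^{-\pi|y|}$ gives
\[
\int_{y_0 - \delta/2}^{y_0 + \delta/2} \left|\frac{\Phi(x+iy)}{x+iy}\right|\dy \le e^{\pi(|y_0| + \delta/2)} \int_{-\infty}^{\infty} \left|\frac{\Phi(x+iy)}{x+iy}\right| e^{-\pi|y|}\dy.
\]
By hypothesis (\ref{cv11}) applied with $\eta = \delta/2$, the latter integral is bounded, uniformly for $x \ge \delta/2$, by a finite constant $M = M(\delta, \Phi)$. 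Integrating in $x$ over an interval of length $\delta$ and combining with the mean-value estimate above then yields
\[
|\Phi(z_0)| \le c(\delta, \Phi)\, |z_0|\, e^{\pi|y_0|}, \qquad c(\delta, \Phi) = \frac{4\, M\, e^{\pi\delta/2}}{\pi \delta},
\]
which is (\ref{cv13}).

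The principal subtlety is geometric: one needs $x_0 \ge \delta$ in order to fit a disk of radius $\delta/2$ around $z_0$ entirely inside $\rr$ while keeping clear of the singularity of $1/w$ at $w=0$, so that $|\Phi/w|$ is a genuinely subharmonic function on the disk. It is worth remarking that hypotheses (\ref{cv10}) and (\ref{cv12}) play no role in this particular estimate; only (\ref{cv11}), invoked at $\eta = \delta/2$, enters the argument. The other two conditions are presumably reserved for subsequent contour-shifting arguments, where the decay of horizontal segments (\ref{cv10}) and the vanishing of vertical-line integrals as $\Re(w) \to \infty$ (\ref{cv12}) are needed to justify deforming contours out to the boundary of $\rr$.
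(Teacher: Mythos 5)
Your proof is correct, and it takes a genuinely different route from the paper's. The paper proceeds via Cauchy's integral formula applied to the kernel $\Phi(w)/\bigl(w(w-z)\cos\pi(w-z)\bigr)$ around a rectangle centered on the vertical line through $z$: the horizontal sides are killed as $T\to\infty$ using (\ref{cv10}), and the two remaining vertical integrals are bounded by (\ref{cv11}), with the factor $1/\cos\pi(w-z)$ supplying the $e^{-\pi|v-y|}$ decay needed to make those integrals converge. Your argument instead invokes the sub-mean-value inequality for the subharmonic function $|\Phi(w)/w|$ over the disk $D(z_0,\delta/2)$, dominates the area integral by the enclosing square, and applies (\ref{cv11}) to each vertical slice. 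The two approaches yield constants of the same shape, but yours is more elementary (no contour deformation, no carefully chosen meromorphic kernel) and, as you correctly note, uses only hypothesis (\ref{cv11}) — whereas the paper's proof also needs (\ref{cv10}) to discard the horizontal sides of the rectangle. That is a mild strengthening: the lemma as stated holds under (\ref{cv11}) alone. The paper's choice of the $\cos$ kernel is not gratuitous, though; it is the same device used in the subsequent lemmas (\ref{cv22}), (\ref{cv38}), (\ref{cv46}) to produce the interpolation sums at the half-integers, so the authors are reusing machinery rather than reaching for the shortest proof of this particular estimate. Both are complete; yours buys economy of hypotheses, theirs buys uniformity of technique across the section.
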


\begin{proof}  Write $\eta = \min\{\frac14, \h \delta\}$, and set
\begin{equation*}
c_1(\eta, \Phi) = \sup\left\{\int_{-\infty}^{\infty}
			\left|\frac{\Phi(u+iv)}{u+iv}\right|e^{-\pi|v|}\ \dv : \eta \le u \right\}.
\end{equation*}
Then $c_1(\eta, \Phi)$ is finite by (\ref{cv11}).  Let $z = x+iy$ satisfy
$\delta \le \Re(z)$ and let $T$ be a positive real parameter such that $|y|+ \eta < T$.  Then write 
$\Gamma(z, \eta, T)$ for the simply connected, positively oriented, rectangular path connecting the
points $x-\eta - iT, x+\eta - iT, x+\eta + iT, x-\eta + iT,$ 
and $x-\eta - iT$.  From Cauchy's integral formula we have
\begin{equation}\label{cv14}
\frac{\Phi(z)}{z} = \frac{1}{2\pi i}\int_{\Gamma(z, \eta, T)}
			\frac{\Phi(w)}{w(w - z) \cos \pi (w-z)}\ \dw.
\end{equation}
At each point $w = u + iv$ on the path $\Gamma(z, \eta, T)$ we find that
\begin{equation}\label{cv15}
\eta \le |w - z|
\end{equation}
and 
\begin{align*}
\begin{split}
\frac{1}{|\cos \pi (w-z)|^2} &= \frac{2}{\bigl(\cos 2\pi (u-x) + \cosh 2\pi (v-y)\bigr)} \\
	&\le \frac{2}{\bigl(\cosh 2\pi (v-y)\bigr)} \\
	&\le 4e^{-2\pi |v-y|} \le 4e^{2\pi (|y| - |v|)},
\end{split}
\end{align*}
which implies
\begin{align}\label{cv16}
\begin{split}
\frac{1}{|\cos \pi (w-z)|} & \le 2e^{\pi (|y| - |v|)}.
\end{split}
\end{align}
Using the estimates (\ref{cv15}) and (\ref{cv16}), together with (\ref{cv10}) we get
\begin{align}\label{cv17}
\begin{split}
\limsup_{T\rightarrow \infty}\ \Bigg|\int_{x-\eta\pm iT}^{x+\eta\pm iT}
                        &\frac{\Phi(w)}{w(w - z)\cos \pi (w-z)}\ \dw\Bigg| \\
	&\le \limsup_{T\rightarrow \infty} 2 \eta^{-1} e^{\pi (|y| - T)} 
			 \int_{x-\eta}^{x+\eta}\left|\frac{\Phi(u\pm iT)}{u\pm iT}\right|\, \du \\
	&= 0.
\end{split}
\end{align}
It follows from (\ref{cv14}) and (\ref{cv17}) that
\begin{align}\label{cv18}
\begin{split}
\frac{\Phi(z)}{z} &= \frac{1}{2\pi i}\int_{x+\eta-i\infty}^{x+\eta+i\infty} 
				\frac{\Phi(w)}{w(w - z)\cos \pi (w-z)}\ \dw \\
	&\qquad\qquad -\frac{1}{2\pi i} \int_{x-\eta-i\infty}^{x-\eta+i\infty}
				\frac{\Phi(w)}{w(w - z)\cos \pi (w-z)}\ \dw.
\end{split}
\end{align}
By appealing to (\ref{cv15}) and (\ref{cv16}) again we find that
\begin{align}\label{cv19}
\begin{split}
\Bigg|\int_{x\pm \eta-i\infty}^{x\pm \eta+i\infty}
				&\frac{\Phi(w)}{w(w - z)\cos \pi (w-z)}\ \dw\Bigg|  \\
	&\le 2 \eta^{-1} e^{\pi |y|}\int_{-\infty}^{\infty} 
			\left|\frac{\Phi(x\pm \eta +iv)}{x\pm \eta +iv}\right|e^{-\pi |v|}\ \dv \\
	&\le 2 c_1(\eta, \Phi)\eta^{-1} e^{\pi |y|}.
\end{split}
\end{align}
Combining (\ref{cv18}) and (\ref{cv19}) leads to the estimate
\begin{equation*}\label{cv20}
\left|\frac{\Phi(z)}{z}\right| \le 2(\pi \eta)^{-1} c_1(\eta, \Phi) e^{\pi |y|},
\end{equation*}
and this plainly verifies (\ref{cv13}) with $c(\delta,\Phi) = 2(\pi \eta)^{-1}c_1(\eta,\Phi)$.
\end{proof}

Let $w=u+iv$ be a complex variable.  From (\ref{cv11}) we find that for each positive real number 
$\beta$ such that $\beta - \h$ is not an integer, and each complex number $z$ with $|\Re(z)|\not=\beta$, the function
\begin{equation*}\label{cv21}
w \mapsto \Bigl(\frac{\cos \pi z}{\cos \pi w}\Bigr)\Bigl(\frac{2w}{z^2 - w^2}\Bigr)\Phi(w)
\end{equation*}
is integrable along the vertical line $\Re(w) = \beta$.  We define a 
complex valued function $z\mapsto I(\beta, \Phi; z)$ on the open set
\begin{equation}\label{open}
\{z\in \C: \left|\Re(z)\right| \neq \beta\}
\end{equation}
by
\begin{equation}\label{cv22}
I(\beta, \Phi; z) = \frac{1}{2\pi i}\int_{\beta-i\infty}^{\beta+i\infty}\Bigl(\frac{\cos \pi z}{\cos \pi w}\Bigr)
				\Bigl(\frac{2w}{z^2 - w^2}\Bigr)\Phi(w)\, \dw.
\end{equation}
It follows using Morera's theorem that $z \mapsto I(\beta, \Phi; z)$ is analytic in each of the three connected components. 

Next we prove a simple estimate for $I(\beta, \Phi; z)$.

\begin{lemma}\label{lem2.2}
Assume that the analytic function $\Phi:\rr \rightarrow \C$ satisfies the conditions
{\rm (\ref{cv10})}, {\rm (\ref{cv11})}, and {\rm (\ref{cv12})}.  Let $\beta$ be a positive real number,
$z = x + iy$ a complex number such that $|\Re(z)|\not=\beta$, and write
\begin{equation}\label{cv25}
B(\beta, \Phi) = \frac{2}{\pi} 
	\int_{-\infty}^{+\infty} \left|\frac{\Phi(\beta + iv)}{\beta + iv}\right|e^{-\pi|v|}\ \dv.
\end{equation}
If $\beta - \h$ is not an integer then
\begin{equation}\label{cv26}
|I(\beta, \Phi; z)| \le B(\beta, \Phi) \left|\sec \pi\beta \right|
	\left(1 + \frac{|z|}{\bigl||x|-\beta\bigr|}\right)e^{\pi |y|}.
\end{equation}

\end{lemma}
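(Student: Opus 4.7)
I would parameterize the integral in (\ref{cv22}) by writing $w = \beta + iv$ with $v \in \R$, so $\dw = i\, \dv$ and
\begin{equation*}
|I(\beta, \Phi; z)| \le \frac{1}{2\pi}\int_{-\infty}^{\infty}\frac{|\cos \pi z|}{|\cos \pi w|} \cdot \frac{2|w|^2}{|z^2 - w^2|} \cdot \frac{|\Phi(\beta+iv)|}{|\beta + iv|}\, \dv,
\end{equation*}
where I have multiplied and divided by $|w|$ to make the weight in the definition (\ref{cv25}) of $B(\beta,\Phi)$ appear explicitly. The strategy is to bound the first two factors pointwise in $v$ by $|\sec\pi\beta|\bigl(1 + |z|/\bigl||x|-\beta\bigr|\bigr)e^{\pi|y|}$ times something integrable that, after combining with $1/(2\pi)$, reproduces the $(2/\pi) e^{-\pi|v|}$ appearing in $B(\beta, \Phi)$.

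For the trigonometric ratio, the identity $|\cos(a+ib)|^2 = \cos^2 a\cosh^2 b + \sin^2 a \sinh^2 b$ yields both $|\cos \pi z| \le \cosh \pi y$ and $|\cos \pi w| \ge |\cos \pi \beta|\cosh \pi v$, so that $|\cos \pi z|/|\cos \pi w| \le |\sec\pi\beta|\cosh(\pi y)/\cosh(\pi v)$. For the rational factor, I would use the partial fraction decomposition
\begin{equation*}
\frac{2w^2}{z^2 - w^2} = -2 + \frac{z}{z-w} + \frac{z}{z+w},
\end{equation*}
which gives $|2w^2/(z^2 - w^2)| \le 2 + |z|/|z-w| + |z|/|z+w|$.

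The remaining geometric point is that $\min\bigl(|z-w|, |z+w|\bigr) \ge \bigl||x| - \beta\bigr|$. This follows from $|z\pm w| \ge |\Re(z\pm w)| = |x\pm\beta|$ after a short case analysis on the sign of $x$: for $x \ge 0$ one has $|x-\beta| = \bigl||x|-\beta\bigr|$ and $x+\beta \ge |x-\beta|$, and the case $x < 0$ is symmetric. Hence $|2w^2/(z^2-w^2)| \le 2\bigl(1 + |z|/\bigl||x|-\beta\bigr|\bigr)$. Assembling the estimates and using $\cosh\pi y \le e^{\pi|y|}$ together with $1/\cosh\pi v \le 2e^{-\pi|v|}$, the two factors of $2$ combine with $1/(2\pi)$ to produce precisely the $2/\pi$ of (\ref{cv25}), and (\ref{cv26}) follows. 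The main subtlety is not the estimates themselves but choosing the right algebraic decomposition of $2w^2/(z^2-w^2)$: one needs a $|z|$ in the numerator to pair with the geometric bound $\min|z\pm w| \ge \bigl||x|-\beta\bigr|$, while simultaneously producing the weight $|\Phi|/|w|$ demanded by the definition of $B(\beta,\Phi)$. The more obvious identity $2w/(z^2-w^2) = 1/(z-w) - 1/(z+w)$ does not yield a $|z|$ and is therefore insufficient.
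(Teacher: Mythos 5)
Your proof is correct and follows essentially the same argument as the paper: parameterize the contour by $w = \beta + iv$, bound the cosine ratio by $2|\sec\pi\beta|\,e^{\pi|y|}e^{-\pi|v|}$, and control the rational factor via $\min\{|z-w|,|z+w|\}\ge \bigl||x|-\beta\bigr|$. The one small variation is that the paper writes $w^2/(z^2-w^2) = -1 + z^2/(z^2-w^2)$ and then uses $|z|\le\max\{|z-w|,|z+w|\}$ to bound $|z|^2/(\min\cdot\max)\le |z|/\min$, whereas you use the partial-fraction identity $2w^2/(z^2-w^2) = -2 + z/(z-w) + z/(z+w)$ together with $\max\ge\min$; both give exactly $2\bigl(1 + |z|/\bigl||x|-\beta\bigr|\bigr)$ and the assembly is identical.
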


\begin{proof}  
On the vertical line $\Re(w)=\beta$ we have
\begin{equation*}\label{cv28}
\bigl||x|-\beta \bigr|\le \min\{|z-w|,|z+w|\}
\end{equation*}
and
\begin{equation*}\label{cv29}
|z|\le \hh|z-w|+\hh|z+w|\le \max\{|z-w|,|z+w|\}.
\end{equation*}
Therefore
\begin{align}\label{cv30}
\begin{split}
\Bigl|\frac{w^2}{z^2-w^2}\Bigr| &\le 1 + \Bigl|\frac{z^2}{z^2-w^2}\Bigr| \\
	&= 1 + |z|^2\big(\min\{|z-w|,|z+w|\}\max\{|z-w|,|z+w|\}\bigr)^{-1} \\
	&\le 1 + \frac{|z|}{\bigl||x|-\beta\bigr|}.
\end{split}
\end{align}
On the line $\Re(w)=\beta$ we also use the elementary inequality
\begin{equation}\label{cv31}
|\cos \pi (\beta + iv)|^{-1} \le 2 e^{-\pi |v|} \left|\sec \pi\beta\right|.
\end{equation}
Then we use (\ref{cv30}) and (\ref{cv31}) to estimate the integral on the right of (\ref{cv22}).  The bound
(\ref{cv26}) follows easily.
\end{proof}

For each positive number $\xi$ we define an even rational function $z\mapsto \A(\xi, \Phi; z)$ 
on $\C$ by
\begin{align}\label{cv32}
\begin{split}
\A(\xi, \Phi; z) &= \frac{\Phi(\xi)}{(z - \xi)} -\frac{\Phi(\xi)}{(z + \xi)}.
\end{split}
\end{align}

\begin{lemma}\label{lem2.3}  
Assume that the analytic function $\Phi:\rr \rightarrow \C$ satisfies the conditions
{\rm (\ref{cv10})}, {\rm (\ref{cv11})}, and {\rm (\ref{cv12})}.  Then the sequence of entire functions
\begin{equation}\label{cv33}
\Bigl(\frac{\cos \pi z}{\pi}\Bigr)\sum_{n=1}^N \,(-1)^n \,\A(n - \hh, \Phi; z),\ \text{where}\ N = 1, 2, 3, \dots ,
\end{equation}
converges uniformly on compact subsets of $\C$ as $N\rightarrow \infty$, and therefore
\begin{equation}\label{cv34}
\K(\Phi, z) = \lim_{N\rightarrow \infty}\Bigl(\frac{\cos \pi z}{\pi}\Bigr) \sum_{n=1}^N \,(-1)^n\,\A(n - \hh, \Phi; z)
\end{equation}
defines an entire function.
\end{lemma}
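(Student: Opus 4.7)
The plan is to realize the partial sums in (\ref{cv33}) as residue sums of the integrand appearing in $I(\beta,\Phi;z)$, and then use Lemma \ref{lem2.2} to force the error to zero as $N\to\infty$. For a fixed $z=x+iy$, consider the meromorphic function
\begin{equation*}
F(w;z) := \Bigl(\frac{\cos \pi z}{\cos \pi w}\Bigr)\Bigl(\frac{2w}{z^2-w^2}\Bigr)\Phi(w),
\end{equation*}
which on $\rr$ has simple poles at the half-integers $w=n-\h$. Since $\cos\pi w$ has derivative $(-1)^n\pi$ at $w=n-\h$, a short calculation shows that the residue of $F(\,\cdot\,;z)$ at $w=n-\h$ equals $\pi^{-1}(-1)^n\cos(\pi z)\,\A(n-\h,\Phi;z)$; summing from $n=1$ to $N$ therefore recovers the $N$-th partial sum $S_N(z)$ of (\ref{cv33}). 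In addition, $F(\,\cdot\,;z)$ has a simple pole at $w=z$ (present only when $\Re z>0$) whose residue equals $-\Phi(z)$.

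Fix $\eta\in(0,\h)$ and a positive integer $N$, and apply the residue theorem to $F(\,\cdot\,;z)$ on the counterclockwise rectangular contour with vertices $\eta\pm iT$ and $N\pm iT$. Letting $T\to\infty$, the horizontal portions vanish by (\ref{cv10}), while the vertical portions assemble into $I(N,\Phi;z)-I(\eta,\Phi;z)$. Equating this to the sum of enclosed residues yields the key identity
\begin{equation*}
S_N(z) \;=\; I(N,\Phi;z) \;-\; I(\eta,\Phi;z) \;+\; \Phi(z)\,\mathbf{1}_{\{\eta<x<N\}}.
\end{equation*}
Since $|\sec \pi N|=1$, Lemma \ref{lem2.2} applied with $\beta=N$ gives the bound $|I(N,\Phi;z)|\le B(N,\Phi)\bigl(1+|z|/|N-|x||\bigr)e^{\pi|y|}$; for $z$ in a fixed compact set $K$ the geometric factor stays bounded once $N$ is large, and $B(N,\Phi)\to 0$ by hypothesis (\ref{cv12}). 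Hence $I(N,\Phi;z)\to 0$ uniformly for $z\in K$.

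This establishes locally uniform convergence of $S_N$ on any compact set avoiding the vertical line $\{x=\eta\}$; since $\A(\xi,\Phi;z)$ is even in $z$, so is each $S_N(z)$, and the argument extends to $x<0$ by symmetry. Because $\eta\in(0,\h)$ is arbitrary, every compact subset of $\C$ is covered, and a locally uniform limit of entire functions is entire; this gives that (\ref{cv34}) defines an entire function $\K(\Phi,z)$. The main subtlety is the additional residue at $w=z$, which injects the term $\Phi(z)\mathbf{1}_{\{\eta<x<N\}}$ and threatens to create a jump across $\{x=\eta\}$ in the limit; however, since the partial sums $S_N$ are themselves entire, the two piecewise descriptions of the limit must be analytic continuations of one another, so no true discontinuity arises.
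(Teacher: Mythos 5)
Your proposal is correct and follows the same overall strategy as the paper: realize the partial sums as residue sums of the integrand in $I(\beta,\Phi;z)$, apply the residue theorem on the rectangle with vertical sides at $\Re w=\eta$ and $\Re w=N$, use (\ref{cv10}) to discard the horizontal edges, and then let $N\to\infty$ using Lemma \ref{lem2.2} together with (\ref{cv12}) (noting $|\sec\pi N|=1$) to kill $I(N,\Phi;z)$ uniformly on compacta. The one place where the paper does something slicker is the final passage to the limit: rather than carrying the term $\Phi(z)\mathbf{1}_{\{\eta<x<N\}}$ and arguing that the apparent jump across $\{\Re z=\eta\}$ is illusory, the paper subtracts the identity (\ref{cv39}) at indices $M<N$ to obtain (\ref{cv40}),
\begin{equation*}
\Bigl(\frac{\cos\pi z}{\pi}\Bigr)\sum_{n=M+1}^{N}(-1)^n\A(n-\hh,\Phi;z)=I(N,\Phi;z)-I(M,\Phi;z),
\end{equation*}
in which the $\Phi(z)$ term has cancelled; both sides are analytic on the full strip $\{|\Re z|<M\}$, so the identity extends there by analytic continuation, and the uniform Cauchy property on a compact $\J\subseteq\{2|z|<L\}$ follows at once from Lemma \ref{lem2.2} applied with $\beta=M,N\ge L$. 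This telescoping sidesteps the discontinuity question entirely. Your approach does work, but the concluding claim that ``every compact subset of $\C$ is covered'' because $\eta$ is arbitrary deserves to be spelled out (for instance: given compact $K$, cover it by $K\cap\{\Re z\ge\tfrac14\}$, $K\cap\{|\Re z|\le\tfrac14\}$, $K\cap\{\Re z\le-\tfrac14\}$ and run the argument with, say, $\eta=\tfrac18$ and $\eta=\tfrac38$ respectively; uniform Cauchy on the union of finitely many pieces then gives uniform Cauchy on $K$). As written, the remark about the two piecewise descriptions being analytic continuations of one another correctly explains why the limit is entire, but by itself it does not directly furnish uniformity of convergence on compacta meeting the line $\{\Re z=\eta\}$; the paper's subtraction trick gets this for free.
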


\begin{proof} We assume that $z$ is a complex number in $\rr$ such that $z-\h$ is not an integer.  Then
\begin{equation}\label{cv37}
w\mapsto \Bigl(\frac{\cos \pi z}{\cos \pi w}\Bigr)\Bigl(\frac{2w}{z^2 - w^2}\Bigr)\Phi(w)
\end{equation}
defines a meromorphic function of $w$ on the right half plane $\rr$.  We find that (\ref{cv37}) has a
simple pole at $w = z$ with residue $-\Phi(z)$.  And for each positive integer $n$, (\ref{cv37}) has a 
pole of order at most one at $w = n - \h$ with residue
\begin{equation*}
\Bigl(\frac{\cos \pi z}{\pi}\Bigr)\,(-1)^n\, \A(n - \hh, \Phi; z). 
\end{equation*} 
Plainly (\ref{cv37}) has no other poles in $\rr$.  Let $0 < \beta < \h$, let $N$ be a positive integer, and $T$ a
positive real parameter.  Write $\Gamma(\beta, N, T)$ for the simply connected, positively oriented
rectangular path connecting the points $\beta - iT$, $N - iT$, $N + iT$, $\beta + iT$ and $\beta - iT$.
If $z$ satisfies $\beta < \Re(z) < N$ and $|\Im(z)| < T$, and $z-\h$ is not an 
integer, then from the residue theorem we obtain the identity
\begin{align}\label{cv38}
\begin{split}
\Bigl(\frac{\cos \pi z}{\pi}\Bigr) &\sum_{n=1}^N \, (-1)^n\,\A(n - \hh, \Phi; z) - \Phi(z) \\
	&=\frac{1}{2\pi i}\int_{\Gamma(\beta, N, T)} \Bigl(\frac{\cos \pi z}{\cos \pi w}
		\Bigr)\Bigl(\frac{2w}{z^2 - w^2}\Bigr)\Phi(w)\ \dw.
\end{split}
\end{align}
We let $T\rightarrow \infty$ on the right hand side of (\ref{cv38}), and we use the hypotheses (\ref{cv10})
and (\ref{cv11}).  In this way we conclude that
\begin{equation}\label{cv39}
\Bigl(\frac{\cos \pi z}{\pi}\Bigr) \sum_{n=1}^N\, (-1)^n \A(n - \hh, \Phi; z) - \Phi(z) = I(N, \Phi; z) - I(\beta, \Phi; z).
\end{equation}
Initially (\ref{cv39}) holds for $\beta < \Re(z) < N$ and $z-\h$ not an integer.  However, we have already observed
that both sides of (\ref{cv39}) are analytic in the strip $\{z\in \C: \beta < \Re(z) < N\}$.  Therefore the condition
that $z-\h$ is not an integer can be dropped.

Now let $M < N$ be positive integers.  From (\ref{cv39}) we find that
\begin{equation}\label{cv40}
\Bigl(\frac{\cos \pi z}{\pi}\Bigr) \sum_{n=M+1}^N  (-1)^n \,\A(n - \hh, \Phi; z)= I(N, \Phi; z) - I(M, \Phi; z)
\end{equation}
in the infinite strip $\{z\in \C: \beta < \Re(z) < M\}$.  In fact we have seen that both sides of (\ref{cv40})
are analytic in the infinite strip $\{z\in \C: |\Re(z)| < M\}$.  Therefore the identity (\ref{cv40}) must hold in
this larger domain by analytic continuation.  Let $\J\subseteq \C$ be a compact set and assume that $L$ is an 
integer so large that $\J \subseteq \{z\in \C: 2|z| < L\}$.  From (\ref{cv12}), Lemma \ref{lem2.2}, and 
(\ref{cv40}), it is obvious that the sequence of entire functions (\ref{cv33}), where $L \le N$,
is uniformly Cauchy on $\J$.  This verifies the lemma showing that (\ref{cv34}) 
defines an entire function.
\end{proof}

\begin{lemma}\label{lem2.4}  Assume that the analytic function $\Phi:\rr \rightarrow \C$ satisfies the conditions
{\rm (\ref{cv10})}, {\rm (\ref{cv11})} and {\rm (\ref{cv12})}.  Let the entire function $\K(\Phi, z)$ be
defined by {\rm (\ref{cv34})}.  If $0 < \beta < \h$ then the identity 
\begin{equation}\label{cv41}
\Phi(z) - \K(\Phi, z) =  I(\beta, \Phi; z)
\end{equation} 
holds for all $z$ in the half plane $\{z\in \C: \beta < \Re(z)\}$, and the identity
\begin{equation}\label{cv42}
-\K(\Phi, z) =  I(\beta, \Phi; z)
\end{equation}
holds for all $z$ in the infinite strip $\{z\in \C: |\Re(z)| < \beta\}$.  
\end{lemma}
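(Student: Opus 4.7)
The plan is to deduce (\ref{cv41}) directly from the identity (\ref{cv39}) established during the proof of Lemma \ref{lem2.3} by sending $N\to\infty$, and then to obtain (\ref{cv42}) by rerunning the rectangular contour argument of that lemma in the case where the pole of the integrand at $w=z$ is \emph{not} enclosed.

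First, for (\ref{cv41}), recall that (\ref{cv39}) asserts
\[
\Bigl(\frac{\cos \pi z}{\pi}\Bigr)\sum_{n=1}^N (-1)^n\,\A(n-\tfrac12,\Phi;z) - \Phi(z) = I(N,\Phi;z) - I(\beta,\Phi;z)
\]
throughout the strip $\{\beta<\Re(z)<N\}$. Lemma \ref{lem2.2} together with $|\sec \pi N|=1$ gives
\[
|I(N,\Phi;z)| \le B(N,\Phi)\,\Bigl(1+\tfrac{|z|}{\bigl||x|-N\bigr|}\Bigr)\,e^{\pi|y|},
\]
and the hypothesis (\ref{cv12}) forces $B(N,\Phi)\to 0$ as $N\to\infty$. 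Hence for each fixed $z$ with $\beta<\Re(z)$ we have $I(N,\Phi;z)\to 0$; letting $N\to\infty$ in the displayed identity and using the definition (\ref{cv34}) of $\K(\Phi,z)$ yields $\Phi(z)-\K(\Phi,z)=I(\beta,\Phi;z)$, which is (\ref{cv41}).

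Next, for (\ref{cv42}), I would replay the residue computation of Lemma \ref{lem2.3}, now with $z$ in the sub-strip $\{0<\Re(z)<\beta\}$ and $z-\tfrac12$ not an integer. The meromorphic integrand (\ref{cv37}) still has simple poles at the half-integers $w=n-\tfrac12$, $n=1,\ldots,N$, and a pole at $w=z$; however, because $\Re(z)<\beta$, the point $w=z$ now lies \emph{outside} the positively oriented rectangle $\Gamma(\beta,N,T)$. Consequently the residue theorem and the vanishing of the horizontal sides as $T\to\infty$ (via (\ref{cv10}) and (\ref{cv11})) give the residue-free identity
\[
\Bigl(\frac{\cos \pi z}{\pi}\Bigr)\sum_{n=1}^N (-1)^n\,\A(n-\tfrac12,\Phi;z) = I(N,\Phi;z) - I(\beta,\Phi;z).
\]
Letting $N\to\infty$ as before produces $\K(\Phi,z) = -I(\beta,\Phi;z)$ on the sub-strip $\{0<\Re(z)<\beta\}$. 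The left side of (\ref{cv42}) is entire, while the right side is analytic on the connected strip $\{|\Re(z)|<\beta\}$ by the Morera's theorem remark recorded after (\ref{cv22}); the identity therefore propagates by analytic continuation to the full strip $\{|\Re(z)|<\beta\}$.

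The main obstacle is nothing more than the careful bookkeeping of which pole lies inside the rectangular contour in each of the two cases: picking up the residue at $w=z$ produces the $-\Phi(z)$ term in (\ref{cv41}), while the absence of that pole from the interior of $\Gamma(\beta,N,T)$ in the second case accounts for the disappearance of $\Phi(z)$ in (\ref{cv42}). Once this dichotomy is correctly set up, the rest is routine, driven by the growth estimates of Lemmas \ref{lem2.1} and \ref{lem2.2} and the decay (\ref{cv12}).
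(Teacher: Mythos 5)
Your proof is correct and follows essentially the same route as the paper's: pass to the limit in (\ref{cv39}) using the decay (\ref{cv12}) and the bound of Lemma \ref{lem2.2} for (\ref{cv41}), and repeat the residue computation with the pole at $w=z$ excluded to get (\ref{cv47}) and hence (\ref{cv42}). The only cosmetic difference is that you first establish (\ref{cv42}) on the sub-strip $\{0<\Re(z)<\beta\}$ and then invoke analyticity of $I(\beta,\Phi;\cdot)$ to continue to the full strip, whereas the paper states the residue identity directly on $\{|\Re(z)|<\beta\}$ (where neither $w=z$ nor $w=-z$ is enclosed by $\Gamma(\beta,N,T)$); both variants are valid and equally short.
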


\begin{proof}  We argue as in the proof of Lemma \ref{lem2.3}, letting $N\rightarrow \infty$ on both sides of
(\ref{cv39}).  Then we use (\ref{cv12}) and Lemma \ref{lem2.2}, and obtain the identity
\begin{equation*}\label{cv45}
\Phi(z) - \K(\Phi, z) = I(\beta, \Phi; z)
\end{equation*}
at each point of the half plane $\{z\in \C: \beta < \Re(z)\}$.  This proves (\ref{cv41}).

Next, we assume that $|\Re(z)| < \beta$.  In this case the residue theorem provides the identity
\begin{align}\label{cv46}
\begin{split}
\Bigl(\frac{\cos \pi z}{\pi}\Bigr) &\sum_{n=1}^N \,(-1)^n\,\A(n - \hh, \Phi; z) \\
	&= \frac{1}{2\pi i}\int_{\Gamma(\beta, N, T)} \Bigl(\frac{\cos \pi z}{\cos \pi w}
		\Bigr)\Bigl(\frac{2w}{z^2 - w^2}\Bigr)\Phi(w)\ \dw.
\end{split}
\end{align}
We let $T\rightarrow \infty$ and argue as before.  In this way (\ref{cv46}) leads to 
\begin{equation}\label{cv47}
\Bigl(\frac{\cos \pi z}{\pi}\Bigr) \sum_{n=1}^N \,(-1)^n\,\A(n - \hh, \Phi; z) = I(N, \Phi; z) - I(\beta, \Phi; z).
\end{equation}
Then we let $N\rightarrow \infty$ on both sides of (\ref{cv47}) and we use (\ref{cv12}) and Lemma \ref{lem2.2} 
again.  We find that
\begin{equation*}
 -\K(\Phi, z) = I(\beta, \Phi; z),
\end{equation*}
and this verifies (\ref{cv42}).
\end{proof}

\begin{corollary}\label{cor2.5} 
Suppose that $\Phi(z) = 1$ is constant on $\rr$.  If $0 < \beta < \h$ then 
\begin{equation}\label{cv48}
I(\beta, 1; z) = 0,
\end{equation}
in the open half plane $\{z\in\C: \beta < \Re(z)\}$.
\end{corollary}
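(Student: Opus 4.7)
The plan is to exploit the odd symmetry of the integrand in $w$. Write
$$F(w) := \Bigl(\frac{\cos \pi z}{\cos \pi w}\Bigr)\Bigl(\frac{2w}{z^2 - w^2}\Bigr),$$
so that $I(\beta, 1; z) = (2\pi i)^{-1}\int_{\beta - i\infty}^{\beta + i\infty} F(w)\, \dw$. Since cosine is even while $w \mapsto 2w/(z^2 - w^2)$ is odd, one has the crucial identity $F(-w) = -F(w)$.

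The first step is to shift the contour from $\Re(w) = \beta$ to $\Re(w) = -\beta$ by Cauchy's theorem. For $z$ with $\Re(z) > \beta$, the only singularities of $F$ are the simple poles at $w = \pm z$, which lie outside the strip $|\Re(w)| \le \beta$ since $|\Re(\pm z)| = \Re(z) > \beta$, and the simple poles at the half-integers $w = n + \hh$, $n \in \Z$, which lie outside since $|n + \hh| \ge \hh > \beta$. Hence $F$ is analytic on the closed strip. On horizontal segments $\{u \pm iT : -\beta \le u \le \beta\}$ the bound $|\cos\pi(u + iT)| \ge \sinh \pi T$ gives $|F(w)| = O(T^{-1} e^{-\pi T})$, so the two horizontal sides of the rectangular contour contribute nothing in the limit $T \to \infty$. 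Therefore
$$I(\beta, 1; z) = \frac{1}{2\pi i}\int_{-\beta - i\infty}^{-\beta + i\infty} F(w)\, \dw.$$

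The second step is the substitution $u = -w$ in this shifted integral. This reverses the orientation of the vertical contour (one sign change) and the oddness of $F$ contributes another, yielding
$$\frac{1}{2\pi i}\int_{-\beta - i\infty}^{-\beta + i\infty} F(w)\, \dw = -\frac{1}{2\pi i}\int_{\beta - i\infty}^{\beta + i\infty} F(u)\, du = -I(\beta, 1; z).$$
Combining this with the previous display forces $I(\beta, 1; z) = -I(\beta, 1; z)$, hence $I(\beta, 1; z) = 0$ throughout $\{z \in \C : \Re(z) > \beta\}$, as claimed.

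The only point requiring care is the justification of the contour shift across the strip, but this is routine given the rapid decay of $1/|\cos \pi w|$ on horizontal lines, which is immediate from the identity $|\cos\pi(u + iT)|^2 = \cos^2(\pi u) + \sinh^2(\pi T)$. (Alternatively, one could apply Lemma~\ref{lem2.4} to $\Phi \equiv 1$ after verifying that the constant function meets the hypotheses (\ref{cv10})--(\ref{cv12}), reducing the corollary to the identity $\K(1, z) \equiv 1$, itself a consequence of the Mittag-Leffler expansion $\pi \sec \pi z = \sum_{m \ge 0}(-1)^m\bigl[(m + \hh - z)^{-1} + (m + \hh + z)^{-1}\bigr]$.)
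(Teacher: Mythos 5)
Your main argument is correct and takes a genuinely different route from the paper's. The paper proves the corollary by computing $\K(1,z)=1$ directly from the definition (\ref{cv34}) using the Mittag--Leffler expansion $\pi\sec\pi z = \sum_{n\in\Z}(-1)^n(z-n+\hh)^{-1}$ and then invoking the identity $\Phi(z)-\K(\Phi,z)=I(\beta,\Phi;z)$ of Lemma \ref{lem2.4} — essentially your parenthetical alternative at the end. Your primary argument instead exploits the odd symmetry of the integrand $w\mapsto F(w)$ under $w\mapsto -w$ together with a contour shift across the pole-free strip $|\Re(w)|\le\beta<\hh$ (justified by the decay $|\cos\pi(u+iT)|^2=\cos^2\pi u+\sinh^2\pi T$), forcing $I(\beta,1;z)=-I(\beta,1;z)$. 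This is self-contained and arguably more transparent, since it does not require knowing what $\K(\Phi,z)$ is; the paper's route is shorter given the machinery of Lemmas \ref{lem2.3}--\ref{lem2.4} already in hand. One small point of bookkeeping: in the substitution $u=-w$, the sign changes from $dw=-du$ and from reversing the limits cancel each other, so the single remaining $-1$ comes from $F(-u)=-F(u)$; your prose ("one sign change ... another") suggests two net sign changes, which would give $+1$, though your displayed formula is correct. It would also be cleaner to note up front that $\Phi\equiv 1$ satisfies (\ref{cv10})--(\ref{cv12}), so that $I(\beta,1;z)$ is actually defined by (\ref{cv22}); you verify this only for the alternative route.
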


\begin{proof}  We have
\begin{align*}
\begin{split}
\K(1, z) &= \lim_{N\rightarrow \infty}\left(\frac{\cos \pi z}{\pi}\right) \sum_{n=1}^N \,(-1)^n\,\A(n - \hh, 1; z) \\
     &= \lim_{N\rightarrow \infty}\left(\frac{\cos \pi z}{\pi}\right)\sum_{n=-N+1}^{N}\,(-1)^n (z-n+\hh)^{-1} = 1.
\end{split}
\end{align*}
Now the identity (\ref{cv48}) follows from (\ref{cv41}). 
\end{proof}

\begin{lemma}\label{lem2.6}
Assume that the analytic function $\Phi:\rr \rightarrow \C$ satisfies the conditions
{\rm (\ref{cv10})}, {\rm (\ref{cv11})} and {\rm (\ref{cv12})}.  Let the entire function $\K(\Phi, z)$ be defined by {\rm (\ref{cv34})}. Then there exists a positive number $c(\Phi)$, depending only on $\Phi$, such that the inequality
\begin{equation}\label{cv51}
|\K(\Phi, z)|\le c(\Phi)(1 +|z|)e^{\pi |y|},
\end{equation}
holds for all complex numbers $z=x+iy$.  In particular, $\K(\Phi, z)$ is an entire function of exponential type at most $\pi$.
\end{lemma}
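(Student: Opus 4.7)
The plan is to combine Lemma \ref{lem2.4} with the bounds from Lemmas \ref{lem2.1} and \ref{lem2.2}, covering $\C$ by two overlapping regions. First I would observe that $\K(\Phi, z)$ is an \emph{even} function of $z$: each rational function $\A(n - \hh, \Phi; z)$ is even by direct inspection of \eqref{cv32}, and $\cos \pi z$ is even, so each partial sum in \eqref{cv33} is even, and hence so is the limit $\K(\Phi, z)$. It therefore suffices to bound $|\K(\Phi, z)|$ in the half plane $\{\Re(z) \ge 0\}$.

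Next I would split this half plane along the line $\Re(z) = \tfrac14$ and apply Lemma \ref{lem2.4} with two different values of $\beta$, both lying in $(0, \h)$.

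\textbf{Region 1: $\Re(z) \ge \tfrac14$.} Apply \eqref{cv41} with $\beta = \tfrac18$ to obtain
\begin{equation*}
\K(\Phi, z) = \Phi(z) - I\bigl(\tfrac18, \Phi; z\bigr).
\end{equation*}
Lemma \ref{lem2.1} with $\delta = \tfrac14$ supplies $|\Phi(z)| \le c(\tfrac14, \Phi) |z| e^{\pi|y|}$. For the second term, Lemma \ref{lem2.2} gives an estimate in terms of $1 + |z|/\bigl||x| - \tfrac18\bigr|$; since $|x| \ge \tfrac14$ forces $\bigl||x| - \tfrac18\bigr| \ge \tfrac18$, this factor is at most $1 + 8|z|$.

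\textbf{Region 2: $0 \le \Re(z) \le \tfrac14$.} Now apply \eqref{cv42} with $\beta = \tfrac38$ (still less than $\h$, and the strip $|\Re(z)| < \tfrac38$ contains this region strictly) to obtain $\K(\Phi, z) = -I\bigl(\tfrac38, \Phi; z\bigr)$. Again by Lemma \ref{lem2.2}, $|x| \le \tfrac14$ yields $\bigl||x| - \tfrac38\bigr| \ge \tfrac18$, so the bound is again of the form $C (1 + |z|) e^{\pi|y|}$.

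Taking $c(\Phi)$ to be a suitable maximum of the constants produced above and invoking evenness to extend from $\{\Re(z)\ge 0\}$ to all of $\C$ gives \eqref{cv51}. The exponential type $\pi$ assertion is then immediate from $e^{\pi|y|} \le e^{\pi|z|}$ together with the polynomial prefactor $(1+|z|)$. The only subtlety is making sure both chosen values of $\beta$ lie in $(0, \h)$ (so that Lemma \ref{lem2.4} applies) while keeping a safe buffer from the forbidden lines $|\Re(z)| = \beta$ on which Lemma \ref{lem2.2} fails; the choices $\beta = \tfrac18$ and $\beta = \tfrac38$ with the cut at $\Re(z) = \tfrac14$ were chosen precisely for this reason, and this is the main (mild) obstacle.
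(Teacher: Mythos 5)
Your proof is correct and follows essentially the same route as the paper's: both split at the line $\Re(z) = \tfrac14$, use \eqref{cv41} with $\beta = \tfrac18$ together with Lemmas \ref{lem2.1} and \ref{lem2.2} in the region $\Re(z)\ge\tfrac14$, use \eqref{cv42} with $\beta = \tfrac38$ together with Lemma \ref{lem2.2} in the complementary strip, and invoke evenness of $\K(\Phi,z)$ to cover all of $\C$. The only (inconsequential) difference is ordering: the paper works on $\{\Re(z)\ge\tfrac14\}\cup\{|\Re(z)|\le\tfrac14\}$ and appeals to evenness at the end, while you reduce to $\{\Re(z)\ge 0\}$ by evenness first and then split.
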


\begin{proof}
In the closed half plane $\{z\in \C: \frac14 \le \Re(z)\}$ the identity (\ref{cv41}) implies that
\begin{equation*}
|\K(\Phi, z)| \le |\Phi(z)| + |I(\tfrac{1}{8}, \Phi; z)|.
\end{equation*}
Then an estimate of the form (\ref{cv51}) in this half plane follows from Lemma \ref{lem2.1} and Lemma \ref{lem2.2}.
In the closed infinite strip $\{z\in \C: |\Re(z)| \le \frac14\}$ we have
\begin{equation*}
|\K(\Phi, z)| = |I(\tfrac{3}{8}, \Phi; z)|
\end{equation*}
from the identity (\ref{cv42}).  Plainly an estimate of the form (\ref{cv51}) in this closed infinite
strip follows from Lemma \ref{lem2.2}.  This suffices to prove inequality (\ref{cv51}) for all complex 
$z$, since $\K(\Phi, z)$ is an even function of $z$.
\end{proof}

\section{Fourier expansions}
\begin{lemma}\label{lem3.2}  If $0 < \beta < \h$, then at each point $z$ in the half plane 
$\{z\in\C:\beta < \Re(z)\}$ we have
\begin{equation}\label{cv-1}
e^{-\lambda z}- K(\lambda, z) = \dfrac{1}{2\pi i} \int_{\beta -i \infty}^{\beta + i \infty} 
	\left(\dfrac{\cos \pi z}{\cos \pi w}\right) \left(\dfrac{2w}{z^2 - w^2}\right) e^{-\lambda w}\ \dw.
\end{equation}
\end{lemma}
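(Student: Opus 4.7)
The plan is to apply Lemma~\ref{lem2.4} with the choice $\Phi(w) = e^{-\lambda w}$, $\lambda>0$. First I would verify the hypotheses (\ref{cv10})--(\ref{cv12}). Since $|e^{-\lambda(x+iy)}| = e^{-\lambda x}$, the integrand in each of these three expressions is dominated by $e^{-\lambda x}/|x+iy|$ (times $e^{-\pi|y|}$ where relevant). The exponential decay in $y$ makes (\ref{cv10}) immediate for any $0<a<b<\infty$. For (\ref{cv11}), on a half plane $\eta\le x$ we use $|x+iy|\ge \eta$ and the finiteness of $\int_{-\infty}^{\infty} e^{-\pi|y|}\,\dy$. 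For (\ref{cv12}) the uniform factor $e^{-\lambda x}/x$ forces the bound to zero as $x\to\infty$.

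The main step is to identify the entire function $\K(\Phi, z)$ defined by (\ref{cv34}) with $K(\lambda, z)$ defined in (\ref{Intro1}). Using (\ref{cv32}) with $\xi = n-\hh$ and $\Phi(n-\hh) = e^{-\lambda(n-\hh)} = e^{-\lambda|n-\hh|}$ for $n\ge 1$, the $N$-th partial sum in (\ref{cv34}) becomes
\begin{equation*}
\Bigl(\frac{\cos \pi z}{\pi}\Bigr)\sum_{n=1}^N (-1)^n\left\{\frac{e^{-\lambda|n-\hh|}}{z-n+\hh} - \frac{e^{-\lambda|n-\hh|}}{z+n-\hh}\right\}.
\end{equation*}
In the second collection of terms I would substitute $m = 1-n$; then $|n-\hh| = |m-\hh|$, the sign $-(-1)^n = (-1)^{n+1}$ becomes $(-1)^m$, and $z+n-\hh$ becomes $z-m+\hh$ (using $1-\hh = \hh$). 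After this reindexing the combined partial sum is
\begin{equation*}
\Bigl(\frac{\cos \pi z}{\pi}\Bigr)\sum_{m=-N+1}^{N}\frac{(-1)^m e^{-\lambda|m-\hh|}}{z-m+\hh},
\end{equation*}
which converges to $K(\lambda, z)$ as $N\to\infty$ because the series in (\ref{Intro1}) is absolutely convergent thanks to the exponential decay of $e^{-\lambda|m|}$.

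With $\K(\Phi, z) = K(\lambda, z)$ established, identity (\ref{cv41}) of Lemma~\ref{lem2.4} reads $e^{-\lambda z} - K(\lambda, z) = I(\beta, \Phi; z)$ on $\{\beta<\Re(z)\}$ for $0<\beta<\hh$, and substituting $\Phi(w) = e^{-\lambda w}$ into the definition (\ref{cv22}) of $I(\beta, \Phi; z)$ yields exactly (\ref{cv-1}). The only nonroutine point is the reindexing that matches the one-sided sum in the definition of $\K(\Phi,z)$ with the bilateral sum in the definition of $K(\lambda,z)$; verifying the three hypotheses on $\Phi$ and invoking Lemma~\ref{lem2.4} are otherwise mechanical, so this lemma is essentially a specialization of the results of the preceding section.
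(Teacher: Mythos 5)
Your proof is correct and follows exactly the route the paper uses: specialize $\Phi(w)=e^{-\lambda w}$, verify (\ref{cv10})--(\ref{cv12}), identify $\K(\Phi,z)$ with $K(\lambda,z)$ by reindexing the one-sided sum in (\ref{cv34}) into the bilateral sum of (\ref{Intro1}), and then quote (\ref{cv41}) of Lemma~\ref{lem2.4}. The paper's own proof simply states these three steps without writing out the verifications, which you have supplied correctly.
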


\begin{proof}  We apply Lemma \ref{lem2.3} with $\Phi(z) = e^{-\lambda z}$.  It follows that
\begin{equation*}
\K(\Phi, z) = K(\lambda, z).
\end{equation*}
The identity (\ref{cv-1}) follows now from Lemma \ref{lem2.4}.
\end{proof}
As $x\mapsto K(\lambda,x)$ is a restriction of a function of exponential type $\pi$, bounded and 
integrable on $\R$, its Fourier transform
\begin{equation}\label{finite3}
\tK(\lambda,t) = \int_{-\infty}^{\infty}K(\lambda,x)e(-tx)\, \dx
\end{equation}
is a continuous function of the real variable $t$ supported on the interval $[-\h,\h]$.  Then 
by Fourier inversion we have the representation
\begin{equation}\label{finite4}
K(\lambda,z) = \int_{-\h}^{\h}\tK(\lambda,t)e(tz)\, \dt
\end{equation}
for all complex $z$.  It will be useful to have more explicit information about the Fourier transform 
of this function.

\begin{lemma}\label{lem3.3}  For $|t| \leq \h$ the Fourier transform {\rm (\ref{finite3})} is given by
\begin{equation}\label{finite5}
\tK(\lambda, t) = \dfrac{\sinh\left(\frac{\lambda}{2}\right) \cos \pi t }{\sinh^2\left(\frac{\lambda}{2}\right) + \sin ^2\pi t}. 
\end{equation}
From {\rm (\ref{finite5})} we conclude that 
\begin{equation}\label{pft1}
\tK(\lambda, t) \geq 0
\end{equation}
for all $t \in \R$.
\end{lemma}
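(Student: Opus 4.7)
The plan is to compute $\tK(\lambda,t)$ by Poisson summation, exploiting two facts already in hand: the interpolation identity $K(\lambda, n - \hh) = e^{-\lambda|n - \hh|}$ built into (\ref{Intro1}), and the Paley--Wiener observation (made just after (\ref{Intro3.1})) that $\tK(\lambda,\cdot)$ is continuous and supported on $[-\hh, \hh]$.

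Fix $|t| < \hh$ and set $h_t(x) = K(\lambda, x)\, e(-tx)$, whose Fourier transform is $\widehat{h_t}(s) = \tK(\lambda, s + t)$. Because $\tK(\lambda,\cdot)$ vanishes outside $[-\hh,\hh]$, one has $\widehat{h_t}(n) = 0$ for every nonzero integer $n$, while $\widehat{h_t}(0) = \tK(\lambda, t)$. The shifted Poisson summation formula
$$\sum_{n\in\Z} h_t(n - \hh) = \sum_{n\in\Z} \widehat{h_t}(n)\, e(-n/2)$$
therefore collapses on the Fourier side to a single term, yielding
$$\tK(\lambda,t) = \sum_{n\in\Z} e^{-\lambda|n-\hh|}\, e\bigl(-t(n-\hh)\bigr).$$
Splitting the sum at $n \geq 1$ and $n \leq 0$ and evaluating the two resulting geometric series produces the compact form
$$\tK(\lambda,t) = \frac{1}{2\sinh\bigl(\tfrac{\lambda}{2} + \pi i t\bigr)} + \frac{1}{2\sinh\bigl(\tfrac{\lambda}{2} - \pi i t\bigr)}.$$

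Putting this over a common denominator and invoking the identities $\sinh(A+B) + \sinh(A-B) = 2\sinh A\,\cosh B$ and $2\sinh(A+B)\sinh(A-B) = \cosh 2A - \cosh 2B$ with $A = \lambda/2$, $B = \pi i t$ (so $\cosh(\pi i t) = \cos \pi t$ and $\cosh(2\pi i t) = \cos 2\pi t$) rewrites the numerator as $2\sinh(\lambda/2)\cos \pi t$ and the denominator as $2\bigl(\sinh^2(\lambda/2) + \sin^2 \pi t\bigr)$, producing exactly (\ref{finite5}) on $|t| < \hh$. The endpoints $|t| = \hh$ then follow by continuity, since both sides vanish there. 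Claim (\ref{pft1}) is immediate from (\ref{finite5}): on $[-\hh, \hh]$ the numerator is nonnegative and the denominator strictly positive, while $\tK(\lambda, t) = 0$ for $|t| > \hh$. The only step that requires genuine care is the application of Poisson summation, but the exponential decay on the physical side and the compact support on the Fourier side make the justification entirely standard.
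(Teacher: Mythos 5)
Your argument is correct. You obtain the same intermediate object as the paper — the absolutely convergent series
\[
\tK(\lambda,t) = \sum_{n\in\Z} e^{-\lambda|n-\hh|}\, e\bigl(-t(n-\hh)\bigr),
\]
and you evaluate it identically — but you reach it by a genuinely different route. The paper writes $K(\lambda,z)$ as the interpolation series $\sum_n e^{-\lambda|n-\hh|} H(z-n)$ with $H(z) = \cos\pi z/(\pi(z+\hh))$, computes $\widehat H(t) = e^{\pi i t}$ on $[-\hh,\hh]$, and invokes an adaptation of Theorem~9 of \cite{V} to justify taking the Fourier transform term by term. You instead apply Poisson summation to $h_t(x) = K(\lambda,x)e(-tx)$: the compact support of $\tK$ annihilates every term except $n=0$ on the Fourier side, and the interpolation property $K(\lambda, n-\hh) = e^{-\lambda|n-\hh|}$ gives the physical side explicitly. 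The two are dual descriptions of the same underlying fact, but yours has the practical advantage of being self-contained within the tools the paper already deploys (Poisson summation for $L^1$ functions of bounded variation is exactly the apparatus used in Section~6 around (\ref{ef3})), so it avoids reliance on the external ``adaptation'' of an interpolation theorem. Two minor remarks: the justification of Poisson summation should note that $K(\lambda,\cdot)\in L^1(\R)$ and has bounded variation, which follows from the Plancherel--P\'olya inequalities (\ref{ef1})--(\ref{ef2}); and the continuity argument at $|t|=\hh$ is sound but actually unnecessary, since the geometric-series evaluation and the resulting closed form are valid for all real $t$, and one checks directly that $\sinh(\lambda/2 \pm \pi i/2) = \pm i\cosh(\lambda/2)$ makes the two terms cancel there.
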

\begin{proof}
The following entire function
\begin{equation*}
H(z) = \frac{\cos \pi z}{\pi (z + \hh)}
\end{equation*}
has exponential type $\pi$ and, when restricted to $\R$, belongs to $L^2(\R)$. By Paley-Wiener theorem we know that its Fourier transform is supported on $[-\hh,\hh]$, being explicitly given by
\begin{equation}\label{ftcos}
\widehat{H}(t) = e^{\pi i t} 
\end{equation}
for $t \in [-\hh,\hh]$. An adaptation of \cite[Theorem 9]{V}, together with (\ref{ftcos}), show that the entire function of exponential type at most $\pi$, integrable on $\R$,
\begin{equation}
K(\lambda,z) = \sum_{n\in \Z} e^{-\lambda \left|n-\hh\right|} \left( \frac{\cos \pi(z-n)}{\pi(z-n+\hh)} \right)
\end{equation}
has a continuous Fourier transform supported on $[-\hh,\hh]$ given by
\begin{equation}
\tK(\lambda, t) = \sum_{n \in \Z} e^{-\lambda \left|n-\hh\right|}\ e^{-2\pi i tn}\  e^{\pi i t}
\end{equation}
for $t \in [-\hh,\hh]$. This leads to (\ref{finite5}).
\end{proof}
\begin{lemma}\label{lem3.4}  Let $\nu$ be a finite measure on the Borel subsets of $(0,\infty)$.
For each complex number $z$ the function $\lambda\mapsto K(\lambda,z)$ is $\nu$-integrable on $(0,\infty)$.  The complex valued function
\begin{equation}\label{finite10}
\KK_{\nu}(z) =\int_{0}^{\infty} K(\lambda,z)\, \dnu
\end{equation}
is an entire function which satisfies the inequality
\begin{equation}\label{finite11}
|\KK_{\nu}(z)| \leq \nu\{(0,\infty)\}e^{\pi|y|}
\end{equation}
for all $z = x + iy$.  In particular, $\KK_{\nu}(z)$ is an entire function of exponential 
type at most $\pi$.
\end{lemma}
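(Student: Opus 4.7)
The plan is to derive the estimate (\ref{finite11}) from a pointwise bound $|K(\lambda, z)| \leq e^{\pi|y|}$ that is uniform in $\lambda > 0$, and then deduce entireness by Morera's theorem combined with Fubini.

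For the uniform bound, I would use the Fourier inversion formula (\ref{finite4}) together with the nonnegativity $\tK(\lambda, t) \geq 0$ from Lemma \ref{lem3.3}. Writing $z = x+iy$, we have $|e(tz)| = e^{-2\pi t y} \leq e^{\pi|y|}$ on $[-\h, \h]$, so
\begin{equation*}
|K(\lambda, z)| \leq \int_{-\h}^{\h} \tK(\lambda, t)\, e^{-2\pi t y}\, \dt \leq e^{\pi|y|} \int_{-\h}^{\h} \tK(\lambda, t)\, \dt = e^{\pi|y|}\,K(\lambda, 0),
\end{equation*}
the last equality being Fourier inversion at $z = 0$. To bound $K(\lambda, 0)$ independently of $\lambda$, I appeal to Theorem \ref{thm1.1}(ii): setting $x = 0$ in the identity (\ref{Intro3}) and using $\sgn(\cos 0) = 1$ forces $1 - K(\lambda, 0) \geq 0$, hence $K(\lambda, 0) \leq 1$. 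Combining these yields $|K(\lambda, z)| \leq e^{\pi|y|}$ for every $\lambda > 0$ and every $z = x+iy \in \C$.

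Joint continuity of $(\lambda, z) \mapsto K(\lambda, z)$, visible from the defining series (\ref{Intro1}), shows that $\lambda \mapsto K(\lambda, z)$ is measurable, and combined with the uniform bound together with $\nu\{(0,\infty)\} < \infty$ this yields both $\nu$-integrability and, upon integrating, the estimate (\ref{finite11}). Continuity of $\KK_\nu$ on $\C$ follows by dominated convergence applied to a sequence $z_n \to z$. To upgrade continuity to analyticity, I apply Morera's theorem: for any closed triangle $\Delta \subset \C$, the uniform bound $|K(\lambda, z)| \leq e^{\pi \sup_{w \in \partial\Delta}|\Im w|}$ over the compact contour together with finiteness of $\nu$ justifies Fubini's theorem, giving
\begin{equation*}
\int_{\partial \Delta} \KK_\nu(z)\, dz = \int_0^{\infty}\!\left(\int_{\partial \Delta} K(\lambda, z)\, dz\right) \dnu = 0,
\end{equation*}
since each $K(\lambda, \cdot)$ is entire. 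Hence $\KK_\nu$ is entire, and the exponential type conclusion follows immediately from (\ref{finite11}) because $|y| \leq |z|$.

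The main obstacle is recognizing that the $\lambda$-uniform bound $K(\lambda, 0) \leq 1$ is a free consequence of Theorem \ref{thm1.1}(ii); once this is seen, the remainder is routine measure theory combined with Morera's theorem. Without the sign information from Theorem \ref{thm1.1}(ii), one would have to grapple directly with the explicit series (\ref{Intro1}) at $z = 0$, whose conditional cancellation is much less transparent.
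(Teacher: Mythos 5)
Your proposal is correct and follows essentially the same route as the paper: both use Fourier inversion (\ref{finite4}) plus the nonnegativity of $\tK(\lambda,t)$ from Lemma~\ref{lem3.3} to get the pointwise bound $|K(\lambda,z)| \le e^{\pi|y|} K(\lambda,0)$, then invoke the sign identity (\ref{Intro3}) at $x=0$ to obtain $K(\lambda,0) \le 1$, and finish with Morera's theorem (together with Fubini, justified by the uniform bound and finiteness of $\nu$) to deduce analyticity and exponential type. The only cosmetic difference is that the paper carries out the estimate directly on $\int_0^\infty |K(\lambda,z)|\,\dnu$ rather than isolating the $\lambda$-uniform pointwise bound as an intermediate step.
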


\begin{proof}  We apply (\ref{finite4}) and the fact that $0 \le \tK(\lambda,t)$.  We find that
\begin{align}\label{finite12}
\begin{split}
\int_{0}^{\infty} \left|K(\lambda,z)\right|\, \dnu &=  \int_{0}^{\infty} 
		\left|\int_{-\h}^{\h}\tK(\lambda,t)e(tz)\, \dt\right|\, \dnu \\
	&\leq \int_{0}^{\infty} \int_{-\h}^{\h}\tK(\lambda,t) e^{-2\pi ty}\, \dt\, \dnu \\
	&\leq e^{\pi |y|} \int_{0}^{\infty} \int_{-\h}^{\h}\tK(\lambda,t)\, \dt\, \dnu \\ 
 	&=  e^{\pi |y|}\int_{0}^{\infty} K(\lambda,0)\, \dnu.
\end{split}
\end{align}
As $K(\lambda,0) \le 1$ by (\ref{Intro3}), it follows from (\ref{finite12}) that
\begin{equation*}
\int_{0}^{\infty} \left|K(\lambda,z)\right|\, \dnu \leq \nu\{(0,\infty)\}\, e^{\pi |y|}.
\end{equation*}
This shows that $\lambda\mapsto K(\lambda,z)$
is $\nu$-integrable on $(0,\infty)$ and verifies the bound (\ref{finite11}). It follows easily using Morera's theorem that $z\mapsto \KK_{\nu}(z)$ is an entire function.  Then (\ref{finite11}) implies that this entire function has exponential type at 
most $\pi$.
\end{proof}

Let $\nu$ be a finite measure on the Borel subsets of $(0,\infty)$.  It follows that
\begin{equation}\label{def0}
\Psi_{\nu}(z) = \int_0^{\infty} e^{-\lambda z}\, \dnu
\end{equation}
defines a function that is bounded and continuous in the closed half plane $\{z\in\C: 0\le \Re(z)\}$, and analytic 
in the interior of this half plane.  

\begin{lemma}\label{lem3.5}  If $0 < \beta < \h$, then at each point $z$ in the half plane 
$\{z\in\C:\beta < \Re(z)\}$ we have
\begin{equation}\label{cv6}
\Psi_{\nu}(z) - \KK_{\nu}(z) = \dfrac{1}{2\pi i} \int_{\beta - i\infty}^{\beta + i\infty} 
	\left(\dfrac{\cos \pi z}{\cos \pi w}\right)\left(\dfrac{2w}{z^2 - w^2}\right)\Psi_{\nu}(w)\, \dw.
\end{equation}
\end{lemma}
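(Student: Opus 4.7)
The plan is to derive (\ref{cv6}) from Lemma \ref{lem3.2} by integrating that pointwise-in-$\lambda$ identity against the measure $\nu$. Fix $z$ with $\Re(z) > \beta$. From Lemma \ref{lem3.2} we have, for each $\lambda > 0$,
\begin{equation*}
e^{-\lambda z} - K(\lambda, z) = \dfrac{1}{2\pi i}\int_{\beta - i\infty}^{\beta + i\infty}\left(\dfrac{\cos \pi z}{\cos \pi w}\right)\left(\dfrac{2w}{z^2 - w^2}\right) e^{-\lambda w}\, \dw.
\end{equation*}
Integrating both sides in $\lambda$ with respect to $\dnu$, the left-hand side becomes $\Psi_{\nu}(z) - \KK_{\nu}(z)$ directly from the definitions (\ref{def0}) and (\ref{finite10}). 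The content of the lemma is then the assertion that on the right-hand side one may interchange the $\dnu$ integral with the contour integral, producing $\Psi_{\nu}(w)$ in place of $e^{-\lambda w}$.

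The main technical step is verifying the hypothesis of Fubini's theorem on $(0,\infty) \times \R$ with respect to $\dnu \times \dv$, where $w = \beta + iv$. Here I would combine three estimates: (a) on the vertical line $\Re(w) = \beta$ we have $|e^{-\lambda w}| = e^{-\lambda \beta} \leq 1$ uniformly in $\lambda > 0$; (b) the elementary inequality $|\cos \pi (\beta + iv)|^{-1} \leq 2 |\sec \pi \beta|\, e^{-\pi |v|}$ already used in Lemma \ref{lem2.2}; and (c) for $z$ fixed with $\Re(z) > \beta$, the factor $|2w/(z^2 - w^2)|$ is bounded for $|v|$ small and is $O(1/|v|)$ as $|v| \to \infty$. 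Multiplying these together, the integrand is dominated by a $\lambda$-independent function of the form $C(z,\beta)\, |\cos \pi z|\, e^{-\pi |v|} \min(1, |v|^{-1})$, which is integrable on $\R$, and since $\nu$ is finite this dominating function is integrable against $\dnu \times \dv$.

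With Fubini justified, swapping the order of integration gives
\begin{equation*}
\int_0^{\infty}\!\!\int_{\beta - i\infty}^{\beta + i\infty}\!\!\left(\dfrac{\cos \pi z}{\cos \pi w}\right)\!\left(\dfrac{2w}{z^2 - w^2}\right) e^{-\lambda w}\, \dw\, \dnu = \int_{\beta - i\infty}^{\beta + i\infty}\!\!\left(\dfrac{\cos \pi z}{\cos \pi w}\right)\!\left(\dfrac{2w}{z^2 - w^2}\right) \Psi_{\nu}(w)\, \dw,
\end{equation*}
since $\int_0^\infty e^{-\lambda w}\, \dnu = \Psi_{\nu}(w)$ by (\ref{def0}), and $\Psi_\nu$ is defined on the line $\Re(w) = \beta > 0$ because $\nu$ is finite. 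Dividing by $2\pi i$ then yields (\ref{cv6}). I do not anticipate a genuinely hard step in this argument; the only subtlety is checking that all the intermediate integrals are absolutely convergent, which the decay factor $e^{-\pi|v|}$ supplies for free.
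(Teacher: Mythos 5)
Your proof is correct and follows exactly the paper's route: apply Lemma \ref{lem3.2} pointwise in $\lambda$, integrate against $\dnu$, and swap the order of integration by Fubini. The only difference is that you spell out the dominating function justifying Fubini, whereas the paper simply invokes it; your verification via $|e^{-\lambda w}|\le 1$, the bound (\ref{cv31}), and the decay of $2w/(z^2-w^2)$ on the line $\Re(w)=\beta$ is correct and makes the step fully rigorous.
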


\begin{proof}  We apply (\ref{cv-1}) and Fubini's theorem to get
\begin{align*}
\Psi_{\nu}(z) - &\KK_{\nu}(z) = \\
	     &= \int_{0}^{\infty} \left\{e^{-\lambda z} - K(\lambda, z)\right\}\, \dnu  \\
	     &= \int_{0}^{\infty} \left\{\dfrac{1}{2\pi i} 
	     	\int_{\beta -i \infty}^{\beta + i \infty} \left( \dfrac{\cos \pi z}{\cos \pi w}\right)
		\left( \dfrac{2w}{z^2 - w^2} \right) e^{-\lambda w}\, \dw \right\}\, \dnu \\
	     &= \dfrac{1}{2\pi i} \int_{\beta -i \infty}^{\beta + i \infty} 
		\left( \dfrac{\cos \pi z}{\cos \pi w}\right) \left( \dfrac{2w}{z^2 - w^2} \right) \Psi_{\nu}(w)\, \dw.
\end{align*}
This proves (\ref{cv6}). 
\end{proof}

\section{Proof of Theorem \ref{thm1.2}}

Let $\mu$ be a measure defined on the Borel subsets of $(0,\infty)$ that satisfies (\ref{Intro4}).
Let $z = x+iy$ be a point in the open right half plane $\rr = \{z\in\C: 0<\Re(z)\}$.  Using (\ref{Intro4}) we 
find that
\begin{equation*}\label{pt-1}
\lambda\mapsto e^{-\lambda z} - e^{-\lambda}
\end{equation*}
is integrable on $(0,\infty)$ with respect to $\mu$.  We define $F_{\mu}:\rr\rightarrow \C$ by
\begin{equation}\label{pt0}
F_{\mu}(z) = \int_0^{\infty} \big\{e^{-\lambda z} - e^{-\lambda}\big\}\, \dmu.
\end{equation}
It follows by applying Morera's theorem that $F_{\mu}(z)$ is analytic on $\rr$.  Also, at each point $z$ 
in $\rr$ the derivative of $F_{\mu}$ is given by
\begin{equation}\label{pt1}
F_{\mu}^{\prime}(z) = - \int_0^{\infty} \lambda e^{-\lambda z}\, \dmu.
\end{equation}
Then (\ref{pt1}) leads to the bound
\begin{equation}\label{pt2}
\bigl|F_{\mu}^{\prime}(x + iy)\bigr| \le \int_0^{\infty} \lambda e^{-\lambda x}\, \dmu = \bigl|F_{\mu}^{\prime}(x)\bigr|.
\end{equation}
From (\ref{pt2}) and the dominated convergence theorem we conclude that
\begin{equation}\label{pt3}
\lim_{x\rightarrow \infty} \bigl|F_{\mu}^{\prime}(x + iy)\bigr| = 0
\end{equation}
uniformly in $y$.  Clearly the functions $f_{\mu}(x)$, defined by (\ref{Intro6}), and $F_{\mu}(z)$, defined 
by (\ref{pt0}), satisfy the identities
\begin{equation}\label{pt4}
f_{\mu}(x) = F_{\mu}\bigl(|x|\bigr)\quad\text{and}\quad f_{\mu}^{\prime}(x) = \sgn(x)F_{\mu}^{\prime}\bigl(|x|\bigr)
\end{equation}
for all real $x \neq 0$.
\begin{lemma}\label{lem4.1}  The analytic function $F_{\mu}(z)$ defined by {\rm (\ref{pt0})} satisfies each of the
three conditions {\rm (\ref{cv10})}, {\rm (\ref{cv11})}, and {\rm (\ref{cv12})}.
\end{lemma}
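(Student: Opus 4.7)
The plan is to derive a single pointwise bound $|F_\mu(z)/z|\le \phi(x)$ on $\rr$, where $\phi$ depends only on $x=\Re z$, is finite for every $x>0$, bounded on each half line $[\eta,\infty)$, and tends to $0$ as $x\to\infty$. Granting such a bound, all three conditions \textup{(\ref{cv10})}--\textup{(\ref{cv12})} reduce to an integration of $e^{-\pi|y|}\,\dy$ over $\R$.

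The first step is to exploit the estimate \textup{(\ref{pt2})}, which already asserts $|F_\mu'(x+is)|\le |F_\mu'(x)|$, by integrating $F_\mu'$ along the vertical segment from $x$ to $x+iy$ inside $\rr$. This yields
\begin{equation*}
|F_\mu(x+iy)|\le |F_\mu(x)|+|y|\,|F_\mu'(x)|,
\end{equation*}
and, since $|x+iy|\ge \max\{x,|y|\}$, produces the clean bound
\begin{equation*}
\Bigl|\frac{F_\mu(x+iy)}{x+iy}\Bigr|\le \frac{|F_\mu(x)|}{x}+|F_\mu'(x)|=:\phi(x),
\end{equation*}
independent of $y$. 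The second step is to check that $\phi(x)$ behaves as required. The basic observation is that \textup{(\ref{Intro4})} controls $\int_0^\infty\lambda e^{-\lambda\eta}\,\dmu$ for every $\eta>0$, because the ratio $(\lambda e^{-\lambda\eta})/(\lambda(\lambda^2+1)^{-1})=e^{-\lambda\eta}(\lambda^2+1)$ is bounded on $(0,\infty)$. Applied to \textup{(\ref{pt1})}, this immediately gives $|F_\mu'(x)|$ bounded on $[\eta,\infty)$, and dominated convergence gives $|F_\mu'(x)|\to 0$ as $x\to\infty$ (this is essentially \textup{(\ref{pt3})}). For $|F_\mu(x)|/x$, use $F_\mu(1)=0$ and the elementary inequality $1-e^{-t}\le t$ in the representation
\begin{equation*}
F_\mu(x)=\int_0^\infty e^{-\lambda}\bigl(e^{-\lambda(x-1)}-1\bigr)\dmu \qquad (x\ge 1),
\end{equation*}
to get $|F_\mu(x)|\le (x-1)\int_0^\infty \lambda e^{-\lambda}\,\dmu$, so $|F_\mu(x)|/x$ is bounded on $[1,\infty)$; dominated convergence with the same dominating function $\lambda e^{-\lambda}$ then yields $|F_\mu(x)|/x\to 0$ as $x\to\infty$. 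On the compact piece $[\eta,1]$ the continuity of $F_\mu$ disposes of boundedness.

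The main technical obstacle is exactly this bookkeeping: every $d\mu$-integral that appears must be controlled solely by the single hypothesis \textup{(\ref{Intro4})}, with no extra assumption on $\mu$ near $0$ or $\infty$; once the comparison $\lambda e^{-\lambda\eta}\le C_\eta\,\lambda/(\lambda^2+1)$ is exploited everywhere needed, nothing else is required. Given the bound $|F_\mu(x+iy)/(x+iy)|\le \phi(x)$, condition \textup{(\ref{cv10})} follows from
\begin{equation*}
e^{-\pi|y|}\int_a^b\Bigl|\tfrac{F_\mu(x+iy)}{x+iy}\Bigr|\dx\le (b-a)\,e^{-\pi|y|}\sup_{a\le x\le b}\phi(x)\longrightarrow 0,
\end{equation*}
while \textup{(\ref{cv11})} and \textup{(\ref{cv12})} follow from
\begin{equation*}
\int_{-\infty}^{\infty}\Bigl|\tfrac{F_\mu(x+iy)}{x+iy}\Bigr|e^{-\pi|y|}\dy\le \frac{2\phi(x)}{\pi},
\end{equation*}
which is uniformly bounded for $x\ge\eta$ and tends to $0$ as $x\to\infty$.
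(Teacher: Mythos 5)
Your proof is correct and follows essentially the same route as the paper's: both start from the monotonicity bound $|F_\mu'(x+iy)|\le|F_\mu'(x)|$ in (\ref{pt2}), integrate $F_\mu'$ from $1$ to $z$, and reduce matters to the decay of $|F_\mu'(x)|$, all controlled solely by (\ref{Intro4}). The only organizational difference is that you package the estimate into a single function $\phi(x)=|F_\mu(x)|/x+|F_\mu'(x)|$ via the L-shaped path $1\to x\to x+iy$ (the paper's (\ref{pt8})) and handle all three conditions at once, whereas the paper uses a slightly different straight-line estimate $(1+\xi^{-1})|F_\mu'(\xi)|$ for (\ref{cv10}) and (\ref{cv11}) and reserves the L-path bound for (\ref{cv12}); the substance is the same.
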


\begin{proof}  Let $0 < \xi \le 1$.  If $\xi \le \Re(z)$, then from (\ref{pt2}) we obtain the inequality
\begin{align*}\label{pt5}
\begin{split}
\bigl|F_{\mu}(z)\bigr| &= \left| \int_1^z F_{\mu}^{\prime}(w)\, \dw\right| \\
	&\le |z - 1| \max\big\{\bigl|F_{\mu}^{\prime}(\theta z + 1 - \theta)\bigr|: 0 \le \theta \le 1\big\} \\
	&\le (|z| + 1) \bigl|F_{\mu}^{\prime}(\xi)\bigr|,
\end{split}
\end{align*}
and therefore
\begin{equation}\label{pt6}
\left|\frac{F_{\mu}(z)}{z}\right|\le (1 + \xi^{-1})\bigl|F_{\mu}^{\prime}(\xi)\bigr|.
\end{equation}
The conditions (\ref{cv10}) and (\ref{cv11}) follow from the bound (\ref{pt6}).  

Now assume that $1 \le x = \Re(z)$.  We have
\begin{align*}\label{pt7}
\begin{split}
\bigl|F_{\mu}(x + iy)\bigr| &= \left|\int_1^x F_{\mu}^{\prime}(u)\, \du + i\int_0^y F_{\mu}^{\prime}(x + iv)\, \dv\right| \\
		 &\le \int_1^x \bigr|F_{\mu}^{\prime}(u)\bigr|\, \du + |y|\bigl|F_{\mu}^{\prime}(x)\bigr|,
\end{split}
\end{align*}
and therefore
\begin{equation}\label{pt8}
\left|\frac{F_{\mu}(x + iy)}{x + iy}\right|
	\le \frac{1}{x}\int_1^x \bigr|F_{\mu}^{\prime}(u)\bigr|\, \du + \bigl|F_{\mu}^{\prime}(x)\bigr|.
\end{equation} 
Then (\ref{pt3}) and (\ref{pt8}) imply that
\begin{equation*}\label{pt9}
\lim_{x\rightarrow \infty} \left|\frac{F_{\mu}(x + iy)}{x+iy}\right| = 0
\end{equation*}
uniformly in $y$.  The remaining condition (\ref{cv12}) follows from this.
\end{proof}

We are now in position to apply the results of sections 3 and 4 to the function $F_{\mu}(z)$.
In view of the identities (\ref{pt4}), the entire function $K_{\mu}(z)$, defined by (\ref{Intro8}), and the entire
function $\K(F_{\mu}, z)$, defined by (\ref{cv34}), are equal.  If $0 < \beta < \h$, and $\beta < \Re(z)$, then 
from (\ref{cv41}) of Lemma \ref{lem2.4} we have
\begin{equation}\label{pt10}
F_{\mu}(z) - K_{\mu}(z) = I(\beta, F_{\mu}; z).
\end{equation}
Applying Lemma \ref{lem2.6} we conclude that $K_{\mu}(z)$ is an entire function of exponential type at most $\pi$.
This verifies (i) in the statement of Theorem \ref{thm1.2}.

Next we define a sequence of measures $\nu_1, \nu_2, \nu_3, \dots $ on Borel subsets $E\subseteq (0,\infty)$ by
\begin{equation}\label{pt20}
\nu_n(E) = \int_E \bigl(e^{-\lambda/n} - e^{-\lambda n}\bigr)\, \dmu,\quad\text{for}\quad n = 1, 2, \dots .
\end{equation}
Then
\begin{align*}
\nu_n\{(0,\infty)\} &= \int_0^{\infty} \int_{1/n}^n \lambda e^{-\lambda u}\, \du\, \dmu \\
	&= - \int_{1/n}^n F_{\mu}^{\prime}(u)\, \du \\
	&= F_{\mu}(1/n) - F_{\mu}(n) < \infty, 
\end{align*}
and therefore $\nu_n$ is a finite measure for each $n$.  It will be convenient to simplify the notation used in (\ref{finite10}) and (\ref{def0}).  For $z$ in $\C$ and $n$ a positive integer we write
\begin{equation}\label{pt21}
K_n(z) =\int_{0}^{\infty} K(\lambda,z)\, \dnnu,
\end{equation} 
and for $z$ in $\rr$ we write
\begin{equation}\label{pt22}
\Psi_n(z) = \int_0^{\infty} e^{-\lambda z}\, \dnnu.
\end{equation}
From Lemma \ref{lem3.4} we learn that $K_n(z)$ is an entire function of exponential type at most $\pi$.
If $0 < \beta < \h$ then (\ref{cv48}) and (\ref{cv6}) imply that
\begin{equation}\label{pt23}
\Psi_n(z) - K_n(z) = I(\beta, \Psi_n; z) = I\bigl(\beta, \Psi_n - \Psi_n(1); z\bigr)
\end{equation}
for all complex $z$ such that $\beta < \Re(z)$. From the definitions (\ref{pt20}), (\ref{pt21}), and (\ref{pt22}), we find that
\begin{equation}\label{pt24}
\Psi_n(x) - K_n(x) 
	= \int_0^{\infty} \bigl(e^{-\lambda x} - K(\lambda, x)\bigr)\bigl(e^{-\lambda/n} - e^{-\lambda n}\bigr)\, \dmu
\end{equation}
for all positive real $x$.   

Let $w = u + iv$ be a point in $\rr$.  Then
\begin{equation}\label{pt26}
\Psi_n(w) - \Psi_n(1) = \int_0^{\infty}\bigl(e^{-\lambda w} - e^{-\lambda}\bigr)
		\bigl(e^{-\lambda/n} - e^{-\lambda n}\bigr)\, \dmu,
\end{equation}
and
\begin{equation*}\label{pt27}
\bigl|e^{-\lambda/n} - e^{-\lambda n}\bigr| \le 1
\end{equation*}
for all positive real $\lambda$ and positive integers $n$.  We let $n\rightarrow \infty$ on both sides
of (\ref{pt26}) and apply the dominated convergence theorem.  In this way we conclude that
\begin{equation}\label{pt28}
\lim_{n\rightarrow \infty} \Psi_n(w) - \Psi_n(1) = F_{\mu}(w)
\end{equation}
at each point $w$ in $\rr$.  If $0 < \beta < \h$ then, as in the 
proof of Lemma \ref{lem4.1}, on the line $\beta = \Re(w)$ we have
\begin{align*}\label{pt28}
\bigl|\Psi_n(w) - \Psi_n(1)\bigr| &\le \int_0^{\infty}\left|\int_1^w \lambda e^{-\lambda t}\ \dt\right|\, \dmu \\
	 &\le (|w| + 1)\bigl|F_{\mu}^{\prime}(\beta)\bigr|.
\end{align*}
It follows that
\begin{equation*}\label{pt30}
\left|\frac{\Psi_n(w) - \Psi_n(1)}{w}\right|
\end{equation*}
is bounded on the line $\beta = \Re(w)$.  From this observation, together with (\ref{pt23}) and (\ref{pt28}),
we conclude that
\begin{align}\label{pt31}
\begin{split}
\lim_{n\rightarrow \infty} \Psi_n(z) - K_n(z) &= \lim_{n\rightarrow \infty} I(\beta, \Psi_n - \Psi_n(1); z) \\ 
	&= I(\beta, F_{\mu}; z) \\ 
	&= F_{\mu}(z) - K_{\mu}(z)
\end{split} 
\end{align}
at each complex number $z$ with $\beta < \Re(z)$.  In particular, we have
\begin{equation}\label{pt32}
\lim_{n\rightarrow \infty} \Psi_n(x) - K_n(x) = F_{\mu}(x) - K_{\mu}(x)
\end{equation}
for all positive $x$.  We combine (\ref{pt24}), (\ref{pt32}) and (\ref{Intro3}) to use the monotone convergence theorem.  This leads to the identity
\begin{equation}\label{pt33}
F_{\mu}(x) - K_{\mu}(x) = \int_0^{\infty} \bigl(e^{-\lambda x} - K(\lambda, x)\bigr)\, \dmu
\end{equation}
for all positive $x$.  Then we use the identity on the left of (\ref{pt4}), and the fact that $x\mapsto K_{\mu}(x)$
is an even function, to write (\ref{pt33}) as
\begin{equation}\label{pt34}  
f_{\mu}(x) - K_{\mu}(x) = \int_0^{\infty} \bigl(e^{-\lambda |x|} - K(\lambda, x)\bigr)\, \dmu
\end{equation}
for all $x\not= 0$.  If $f_{\mu}(0)$ is finite then (\ref{pt34}) holds at $x = 0$ by continuity.  If 
$f_{\mu}(0) = \infty$ then both sides of (\ref{pt34}) are $\infty$. This establishes both (ii) and (iii) in the statement of Theorem \ref{thm1.2}.

Because of (\ref{Intro3}), we get
\begin{align}\label{pt40}
\begin{split}
\int_{-\infty}^{\infty} \left|f_{\mu}(x) - K_{\mu}(x)\right|\, \dx
 	&= \int_{-\infty}^{\infty} \int_0^{\infty} \bigl|e^{-\lambda |x|} - K(\lambda, x)\bigr|\, \dmu\, \dx \\
	&= \int_0^{\infty} \int_{-\infty}^{\infty} \bigl|e^{-\lambda |x|} - K(\lambda, x)\bigr|\, \dx\, \dmu \\
	&= \int_0^{\infty} \big\{\tfrac{2}{\lambda} - \tfrac{2}{\lambda}\sech\left(\tfrac{\lambda}{2}\right)\bigr\}\, \dmu
\end{split}
\end{align}
by Fubini's theorem.  This proves (iv) of Theorem \ref{thm1.2}. Similarly, if $t\not= 0$ we find that
\begin{align}\label{pt41}
\begin{split}
\int_{-\infty}^{\infty} &\big\{f_{\mu}(x) - K_{\mu}(x)\big\}e(-tx)\, \dx \\
	&= \int_{-\infty}^{\infty} \Bigg\{\int_0^{\infty} \bigl(e^{-\lambda |x|} 
		- K(\lambda, x)\bigr)\, \dmu\Bigg\}e(-tx) \, \dx \\
	&= \int_0^{\infty} \Bigg\{\int_{-\infty}^{\infty} \bigl(e^{-\lambda |x|} 
		- K(\lambda, x)\bigr)e(-tx) \, \dx\Bigg\}\, \dmu \\
	&= \int_0^{\infty} \frac{2\lambda}{\lambda^2 + 4\pi^2t^2}\ \dmu
	 	- \int_0^{\infty} \tK(\lambda, t)\, \dmu.
\end{split}
\end{align}
This proves (v) in Theorem \ref{thm1.2}.

Finally, we assume that $\ttK(z)$ is an entire function of exponential type at most $\pi$ and that
\begin{equation}\label{pt45}
\int_{-\infty}^{\infty} \Big|f_{\mu}(x) - \ttK(x)\Big|\, \dx < \infty.
\end{equation}
By the triangle inequality $K_{\mu}(x) - \ttK(x)$ is integrable, and since it has exponential type at most $\pi$, we know that its Fourier transform is supported on $[-\hh,\hh]$. Moreover, by a result of Polya and Plancherel (see equation (\ref{ef1}) in section 6) the function $K_{\mu}(x) - \ttK(x)$ is bounded. We write
\begin{equation}\label{pt45.5}
\psi(x) = f_{\mu}(x) - \ttK(x) = \{f_{\mu}(x) - K(x)\} + \{K(x) - \ttK(x)\}.
\end{equation}
From (\ref{pt45.5}) and (\ref{pt41}) we conclude that the Fourier transform of $\psi(x)$ is given by
\begin{equation}
\widehat{\psi}(t) = \int_0^{\infty} \Big\{\frac{2\lambda}{\lambda^2 + 4\pi^2t^2}\Big\}\, \dmu \ \ \ \textrm{for} \ \ \ |t|\geq \hh.
\end{equation}
We simply proceed as in (\ref{S2.11}) to conclude part (vi) of Theorem \ref{thm1.2}. From this we also note that $\ttK(z)$ minimizes the integral (\ref{pt45}) if and only if
\begin{equation}
\ttK\left(n - \hh\right) = f_{\mu}\left(n - \hh\right)
\end{equation}
for all $n \in \Z$. Therefore
\begin{equation*}
(\ttK - K_{\mu})\left( n - \hh \right) = 0
\end{equation*}
for all $n \in \Z$. From the interpolation formulas (see \cite[vol II, p. 275]{Z} or \cite[p. 187]{V}) we observe that 
$$ (\ttK - K_{\mu})(z) =  \beta \cos(\pi z)$$
for some constant $\beta$. But we have seen that $(\ttK - K_{\mu})(x)$ is integrable, thus $\beta = 0$. This concludes the proof of (vii) in Theorem \ref{thm1.2}.


\section{Extremal trigonometric polynomials}

We consider in this section the problem of approximating certain real valued periodic functions by trigonometric polynomials of bounded degree.  We identify functions defined on $\R$ and having period $1$
with functions defined on the compact quotient group $\R/\Z$.  For real numbers $x$ we write
\begin{equation*}
\|x\| = \min\{|x - m|: m\in\Z\}
\end{equation*}
for the distance from $x$ to the nearest integer.  Then $\|\ \|:\R/\Z\rightarrow [0,\h]$ is well defined, and
$(x,y)\rightarrow \|x - y\|$ defines a metric on $\R/\Z$ which induces its quotient topology.  Integrals over 
$\R/\Z$ are with respect to Haar measure normalized so that $\R/\Z$ has measure $1$.  

Let $F:\C\rightarrow \C$ be an entire function of exponential type at most $\pi\delta$, where $\delta$ is a 
positive parameter, and assume that $x\mapsto F(x)$ is integrable on $\R$.  Then the Fourier transform
\begin{equation}\label{ef0}
\tF(t) = \int_{-\infty}^{\infty} F(x)e(-tx)\, \dx
\end{equation}
is a continuous function on $\R$.  By classical results of Plancherel and Polya \cite{PP} (see also 
\cite[Chapter 2, Part 2, section 3]{Y}) we have
\begin{equation}\label{ef1}
\sum_{m=-\infty}^{\infty} |F(\alpha_m)| \le C_1(\epsilon, \delta) \int_{-\infty}^{\infty} |F(x)|\, \dx,
\end{equation}
where $m\mapsto \alpha_m$ is a sequence of real numbers such that $\alpha_{m+1} - \alpha_m \ge \epsilon > 0$, and 
\begin{equation}\label{ef2}
\int_{-\infty}^{\infty} |F^{\prime}(x)|\, \dx \le C_2(\delta) \int_{-\infty}^{\infty} |F(x)|\, \dx.
\end{equation}
Plainly (\ref{ef1}) implies that $F$ is uniformly bounded on $\R$, and therefore $x\mapsto |F(x)|^2$ is integrable.
Then it follows from the Paley-Wiener theorem (see \cite[Theorem 19.3]{Rudin}) that $\tF(t)$ is supported
on the interval $\big[-\tfrac{\delta}{2}, \tfrac{\delta}{2}\big]$.  

The bound (\ref{ef2}) implies that $x\mapsto F(x)$ has bounded variation on $\R$.  Therefore the Poisson 
summation formula (see \cite[Volume I, Chapter 2, section 13]{Z}) holds as a pointwise identity
\begin{equation}\label{ef3}
\sum_{m=-\infty}^{\infty} F(x + m) = \sum_{n=-\infty}^{\infty} \tF(n) e(nx),
\end{equation}
for all real $x$.  It follows from (\ref{ef1}) that the sum on the left of (\ref{ef3}) is absolutely
convergent.  As the continuous function $\tF(t)$ is supported on $\big[-\tfrac{\delta}{2}, \tfrac{\delta}{2}\big]$, the sum
on the right of (\ref{ef3}) has only finitely many nonzero terms, and so defines a trigonometric polynomial
in $x$.

Next we consider the entire function $z\mapsto K\bigl(\delta^{-1}\lambda, \delta z\bigr)$.  This function has exponential type at most $\pi\delta$. We apply (\ref{ef3}) to obtain the identity
\begin{equation}\label{ef4}
\sum_{m=-\infty}^{\infty} K\bigl(\delta^{-1}\lambda, \delta(x + m)\bigr) 
	= \delta^{-1} \sum_{|n| \le \tfrac{\delta}{2}} \tK\bigl(\delta^{-1}\lambda, \delta^{-1}n\bigr) e(nx)
\end{equation}
for all real $x$, and for all positive values of the parameters $\delta$ and $\lambda$.  For our purposes it will 
be convenient to use (\ref{ef4}) with $\delta = 2N+2$, where $N$ is a nonnegative integer, and to modify
the constant term.  For each nonnegative integer $N$ we define a trigonometric polynomial $k(\lambda, N; x)$, of degree at most $N$, by
\begin{align}\label{ef6}
\begin{split}
k(\lambda, N; x) &= -\tfrac{2}{\lambda} + \tfrac{1}{2N+2} \sum_{n=-N}^N \tK\bigl(\tfrac{\lambda}{2N+2}, \tfrac{n}{2N+2}\bigr) e(nx).  
\end{split}
\end{align}

For $\lambda > 0$ the function $x\mapsto e^{-\lambda |x|}$ is continuous, integrable on $\R$, and has
bounded variation.  Therefore, the Poisson summation formula also provides the pointwise identity
\begin{equation}\label{ef10}
\sum_{m=-\infty}^{\infty} e^{-\lambda |x + m|} 
	= \sum_{n=-\infty}^{\infty} \frac{2\lambda}{\lambda^2 + 4\pi^2n^2}\,e(nx).
\end{equation}
And we find that
\begin{equation}\label{ef11}
\sum_{m=-\infty}^{\infty} e^{-\lambda |x + m|} 
	= \frac{\cosh\bigl(\lambda(x - [x] - \hh)\bigr)}{\sinh\bigl(\tfrac{\lambda}{2}\bigr)},
\end{equation}
where $[x]$ is the integer part of the real number $x$.  For our purposes it will be convenient to define
\begin{equation*}\label{ef12}
p:(0,\infty)\times \R/\Z \rightarrow \R 
\end{equation*}
by
\begin{equation}\label{ef13}
p(\lambda, x) = - \tfrac{2}{\lambda} + \sum_{m=-\infty}^{\infty} e^{-\lambda |x + m|}.
\end{equation}
Then $p(\lambda, x)$ is continuous on $(0,\infty)\times \R/\Z$, and differentiable with respect to $x$ at 
each non integer point $x$.  It follows from (\ref{ef10}) that the Fourier coefficients of 
$x\mapsto p(\lambda, x)$ are given by
\begin{equation}\label{ef15}
\int_{\R/\Z} p(\lambda, x)\, \dx = 0,
\end{equation}
and
\begin{equation}\label{ef16}
\int_{\R/\Z} p(\lambda, x)e(-nx)\, \dx = \frac{2\lambda}{\lambda^2 + 4\pi^2n^2}
\end{equation}
for integers $n\not= 0$. 

\begin{theorem}\label{thm6.1}  Let $\lambda$ be a positive real number and $N$ a nonnegative integer. 
\begin{itemize}
\item[(i)]  If $\wk(x)$ is a trigonometric polynomial of degree at most $N$ then
\begin{equation}\label{ef24}
\int_{\R/\Z} \left| p(\lambda,x) - \wk(x) \right| \dx \geq \tfrac{2}{\lambda} - \tfrac{2}{\lambda} \sech \bigl( \tfrac{\lambda}{4N+4} \bigr)
\end{equation}
with equality if and only if $\wk(x) = k(\lambda, N; x)$.
\item[(ii)] For $x \in \R/\Z$ we have
\begin{equation}\label{Sec6.1.1}
\sgn(\cos \pi (2N+2) x) = \sgn\left\{p(\lambda,x) - k(\lambda, N; x) \right\}.
\end{equation}
\end{itemize}
\end{theorem}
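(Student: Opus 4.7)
\medskip
\noindent\emph{Proof plan.} The strategy is to reduce the periodic problem to Theorem \ref{thm1.1} on $\R$ by periodization. First I would apply the Poisson summation identity (\ref{ef4}) with $\delta = 2N+2$ to the entire function $z \mapsto K\bigl(\tfrac{\lambda}{2N+2},(2N+2)z\bigr)$. The key observation is that by Lemma \ref{lem3.3} the explicit formula for $\tK(\lambda,t)$ contains a factor of $\cos \pi t$, so $\tK(\lambda,\pm\hh) = 0$; consequently the $n = \pm(N+1)$ terms on the right-hand side of (\ref{ef4}) drop out and what remains is exactly $\tfrac{2}{\lambda} + k(\lambda,N;x)$. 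Combining this with the periodization (\ref{ef13}) of $e^{-\lambda|x|}$ yields the pointwise identity
\[
p(\lambda,x) - k(\lambda,N;x) = \sum_{m\in\Z}\Bigl\{e^{-\lambda|x+m|} - K\bigl(\tfrac{\lambda}{2N+2},(2N+2)(x+m)\bigr)\Bigr\}.
\]

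For (ii), I would apply Theorem \ref{thm1.1}(ii) (scaled by $\delta=2N+2$) to each summand: the $m$-th term has the sign of $\cos\pi(2N+2)(x+m)$. Since $2N+2$ is an even integer, $\cos\pi(2N+2)(x+m) = \cos\pi(2N+2)x$, so every term shares the sign of $\cos\pi(2N+2)x$, and (\ref{Sec6.1.1}) follows immediately from the displayed identity.

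For (i), I would proceed by duality against the sign function. Substituting $y = (2N+2)x$ into the Fourier expansion (\ref{S2.10}) gives
\[
\sgn(\cos\pi(2N+2)x) = \frac{2}{\pi}\sum_{k\in\Z}\frac{(-1)^k}{2k+1}\,e\bigl((2k+1)(N+1)x\bigr),
\]
whose frequencies all satisfy $|(2k+1)(N+1)| \geq N+1 > N$. Hence any trigonometric polynomial $\wk$ of degree at most $N$ is orthogonal on $\R/\Z$ to this sign function, and
\[
\int_{\R/\Z}\bigl|p(\lambda,x)-\wk(x)\bigr|\,\dx \;\geq\; \int_{\R/\Z}\bigl\{p(\lambda,x)-\wk(x)\bigr\}\sgn(\cos\pi(2N+2)x)\,\dx \;=\; \int_{\R/\Z}p(\lambda,x)\,\sgn(\cos\pi(2N+2)x)\,\dx.
\]
Expanding $\sgn(\cos\pi(2N+2)x)$ in Fourier series (its partial sums are uniformly bounded since $\sgn(\cos\pi\cdot)$ has bounded variation) and integrating termwise against $p(\lambda,x)$, whose nonzero Fourier coefficients are given by (\ref{ef16}), reduces the right-hand side to
\[
\frac{2}{\pi}\sum_{k\in\Z}\frac{(-1)^k}{2k+1}\cdot\frac{2\lambda}{\lambda^2+4\pi^2(2k+1)^2(N+1)^2}.
\]
After the substitution $\nu = \lambda/(2N+2)$ this is exactly $(2N+2)^{-1}$ times the sum computed in (\ref{S2.11}) with parameter $\nu$, and the contour-integration evaluation there yields the value $\tfrac{2}{\lambda} - \tfrac{2}{\lambda}\sech\bigl(\tfrac{\lambda}{4N+4}\bigr)$, which is precisely the asserted lower bound.

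Finally, for the equality case in (\ref{ef24}): equality forces $(p(\lambda,x)-\wk(x))\sgn(\cos\pi(2N+2)x)$ to be nonnegative almost everywhere, which together with (ii) and the continuity of both $p(\lambda,\cdot)$ and $\wk$ requires $\wk$ to interpolate $p(\lambda,\cdot)$ (equivalently, $k(\lambda,N;\cdot)$) at each of the $2N+2$ zeros of $\cos\pi(2N+2)x$ in $[0,1)$. Since a nonzero trigonometric polynomial of degree at most $N$ has at most $2N$ zeros per period, we obtain $\wk = k(\lambda,N;\cdot)$. The main technical point I anticipate is the careful bookkeeping of the rescaling so that the Fourier sum obtained matches exactly the one evaluated in (\ref{S2.11}); once that is in place, every other step is a direct appeal to already-established results.
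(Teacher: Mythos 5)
Your proof is correct and follows essentially the same route as the paper: periodize via Poisson summation to get the pointwise identity $p(\lambda,x)-k(\lambda,N;x)=\sum_m\{e^{-\lambda|x+m|}-K(\delta^{-1}\lambda,\delta(x+m))\}$ with $\delta=2N+2$, read off (ii) from the sign pattern of Theorem~\ref{thm1.1}(ii), obtain the lower bound by integrating against $\sgn(\cos\pi\delta x)$ and exploiting orthogonality with degree-$\le N$ trigonometric polynomials, and settle uniqueness by interpolation at the $2N+2$ zeros of $\cos\pi\delta x$ plus the zero count for trigonometric polynomials. Your explicit remark that $\tK(\cdot,\pm\tfrac12)=0$ (so the $n=\pm(N+1)$ terms in the Poisson sum vanish and the degree drops to $N$) is a useful clarification the paper leaves implicit. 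The one step you should spell out is the ``if'' direction of the equality statement: part~(ii) shows $|p(\lambda,x)-k(\lambda,N;x)|=\{p(\lambda,x)-k(\lambda,N;x)\}\sgn(\cos\pi\delta x)$ pointwise, so the duality inequality is an identity for $\wk=k(\lambda,N;\cdot)$; as written you only establish that equality forces $\wk=k(\lambda,N;\cdot)$, not the converse.
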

\begin{proof} Throughout this proof we consider $\delta = 2N+2$. From (\ref{ef4}), (\ref{ef6}), (\ref{ef10}) and (\ref{ef13}) we obtain
\begin{align}\label{Sec6.2.1.1}
\begin{split}
 p(\lambda,x) - k(\lambda, N; x) &= \sum_{n=-\infty}^{\infty} \left\{ \frac{2\lambda}{\lambda^2 + 4\pi^2n^2} - \delta^{-1} \tK(\delta^{-1}\lambda, \delta^{-1} n) \right\} e(nx) \\
& = \sum_{m=-\infty}^{\infty} \left\{ e^{-\lambda |x + m|} - K\bigl(\delta^{-1}\lambda, \delta(x + m)\bigr) \right\}
\end{split}
 \end{align}
for all $x \in \R/\Z$. Identity (\ref{Sec6.1.1}) now follows from (\ref{Sec6.2.1.1}) and (\ref{Intro3}). Using now (\ref{Intro3}) and (\ref{Intro2}) we arrive at
\begin{align}
\begin{split}
\int_{\R/\Z} \bigl| p(\lambda,x)  - & k(\lambda, N; x) \bigr| \dx = \\
& = \int_{\R/\Z} \left|\sum_{m=-\infty}^{\infty} \left\{ e^{-\lambda |x + m|} - K\bigl(\delta^{-1}\lambda, \delta(x + m)\bigr) \right\} \right| \dx\\
& = \int_{\R/\Z} \sum_{m=-\infty}^{\infty} \left| e^{-\lambda |x + m|} - K\bigl(\delta^{-1}\lambda, \delta(x + m)\bigr) \right| \dx\\
& = \int_{-\infty}^{\infty} \left| e^{-\lambda |x|} - K (\delta^{-1}\lambda, \delta x) \right| \dx \\
& = \frac{2}{\lambda} - \frac{2}{\lambda} \sech \Bigl( \frac{\lambda}{2\delta} \Bigr)\,,
\end{split}
\end{align}
and this proves that equality happens in (\ref{ef24}) when $\wk(x) = k(\lambda, N; x)$. 

Now let $\wk(x)$ be a general trigonometric polynomial of degree at most $N$. Using identity (\ref{S2.10}) we obtain
\begin{align}\label{Sec6.3.1}
\begin{split}
  \int_{\R/\Z} \bigl| p(\lambda,x) - & \wk(x) \bigr| \dx  \geq \left| \int_{\R/\Z} \bigl(p(\lambda,x) - \wk(x)\bigr) \sgn\{\cos \pi \delta x\}\, \dx \right|\\
& = \left| \int_{\R/\Z} p(\lambda,x) \sgn\{\cos \pi \delta x\}\, \dx \right|\\
& = \left| \frac{2}{\pi} \sum_{k = -\infty}^{\infty} \frac{(-1)^k}{(2k+1)} \int_{\R/\Z} p(\lambda,x) e \bigl((k + \hh) \delta x\bigr)\, \dx \right|\\
& = \left| \frac{2}{\pi} \sum_{k = -\infty}^{\infty} \frac{(-1)^k}{(2k+1)} \frac{2\lambda}{\left( \lambda^2 + 4 \pi^2 \bigl((k + \hh) \delta\bigr)^2 \right)} \right|\\
& = \frac{2}{\lambda} - \frac{2}{\lambda} \sech \Bigl( \frac{\lambda}{2\delta} \Bigr)
\end{split}
\end{align}
which proves (\ref{ef24}). If equality happens in (\ref{Sec6.3.1}) we must have (recall that $\delta = 2N+2$)
\begin{equation}\label{Sec6.3.2}
\wk\left(\tfrac{1}{2N+2} (k + \hh)\right) = p\left(\lambda,\tfrac{1}{2N+2} (k + \hh)\right) \ \ \ \textrm{for} \ \ \ k = 0,1,2,...,2N+1.
\end{equation}
Since the degree of $\wk(x)$ is at most $N$, such polynomial exists and is unique \cite[Vol II, page 1]{Z}. Observe that $k(\lambda, N; x)$ already satisfies (\ref{Sec6.3.2}), this being a consequence of (\ref{Sec6.1.1}). Therefore, we must have $\wk(x) = k(\lambda, N; x)$, which finishes the proof.

\end{proof}
It follows from (\ref{ef11}) and (\ref{ef13}) that
\begin{equation}\label{ef42}
-\big\{\tfrac{2}{\lambda} - \csch\bigl(\tfrac{\lambda}{2}\bigr)\big\} = p(\lambda, \hh) \le p(\lambda, x)
	\le p(\lambda, 0) = \coth\bigl(\tfrac{\lambda}{2}\bigr) - \tfrac{2}{\lambda}.
\end{equation}
Then (\ref{ef42}) provides the useful inequality
\begin{align}\label{ef43}
\begin{split}
\bigl|p(\lambda, x)\bigr| &\le \bigl|p(\lambda, x) - p(\lambda, \hh)\bigr| + \bigl|p(\lambda, \hh)\bigr| \\
	&= p(\lambda, x) - p(\lambda, \hh) - p(\lambda, \hh) \\
	&= p(\lambda, x) + 2\big\{\tfrac{2}{\lambda} - \csch\bigl(\tfrac{\lambda}{2}\bigr)\big\}
\end{split}
\end{align}
at each point $(\lambda, x)$ in $(0, \infty)\times \R/\Z$.  From (\ref{ef15}) and (\ref{ef43}) we conclude that
\begin{equation}\label{ef44}
\int_{\R/\Z} \bigl|p(\lambda, x)\bigr|\, \dx \le 2\big\{\tfrac{2}{\lambda} - \csch\bigl(\tfrac{\lambda}{2}\bigr)\big\}. 
\end{equation}

Let $\mu$ be a measure on the Borel subsets of $(0,\infty)$ that satisfies (\ref{Intro4}).  For $0 < x < 1$ it 
follows from (\ref{ef11}) and (\ref{ef13}) that $\lambda\mapsto p(\lambda, x)$ is integrable on $(0,\infty)$ 
with respect to $\mu$. We define $q_{\mu}:\R/\Z\rightarrow \R\cup\{\infty\}$ by
\begin{equation}\label{ef50}
q_{\mu}(x) = \int_0^{\infty} p(\lambda, x)\, \dmu,
\end{equation}
where
\begin{equation}\label{ef51}
q_{\mu}(0) = \int_0^{\infty}\big\{\coth\bigl(\tfrac{\lambda}{2}\bigr) - \tfrac{2}{\lambda}\big\}\, \dmu
\end{equation}
may take the value $\infty$.  Using (\ref{ef44}) and Fubini's theorem we have
\begin{align*}\label{ef52}
\begin{split}
\int_{\R/\Z} \bigl|q_{\mu}(x)\bigr|\ \dx &\le \int_0^{\infty} \int_{\R/\Z} \bigl|p(\lambda, x)\bigr|\, \dx\, \dmu \\ 
	&\le 2 \int_0^{\infty} \big\{\tfrac{2}{\lambda} - \csch\bigl(\tfrac{\lambda}{2}\bigr)\big\}\, \dmu < \infty,
\end{split}
\end{align*}
so that $q_{\mu}$ is integrable on $\R/\Z$.  Using (\ref{ef15}) and (\ref{ef16}), we find that the Fourier 
coefficients of $q_{\mu}$ are given by
\begin{equation}\label{ef53}
\tq_{\mu}(0) = \int_{\R/\Z} q_{\mu}(x)\ \dx = \int_0^{\infty} \int_{\R/\Z} p(\lambda, x)\, \dx\, \dmu = 0,
\end{equation}
and
\begin{align}\label{ef54}
\begin{split}
\tq_{\mu}(n) &= \int_{R/\Z} q_{\mu}(x) e(-nx)\, \dx \\
	     &= \int_0^{\infty} \int_{\R/\Z} p(\lambda, x)e(-nx)\, \dx\, \dmu \\
 	     &= \int_0^{\infty} \frac{2\lambda}{\lambda^2 + 4\pi^2n^2}\, \dmu,
\end{split}
\end{align}
for integers $n\not= 0$.  As $n\mapsto \tq_{\mu}(n)$ is an even function of $n$, and $\tq_{\mu}(n) \ge \tq_{\mu}(n+1)$ 
for $n \geq 1$, the partial sums
\begin{equation}\label{ef55}
q_{\mu}(x) = \lim_{N\rightarrow \infty} \sum_{\substack{n=-N\\ n\not= 0}}^N \tq_{\mu}(n)e(nx)
\end{equation}
converge uniformly on compact subsets of $\R/\Z\setminus\{0\}$, (see \cite[Chapter I, Theorem 2.6]{Z}).
In particular, $q_{\mu}(x)$ is continuous on $\R/\Z\setminus\{0\}$. 

For each nonnegative integer $N$, we define a trigonometric polynomial 
$k_{\mu}(N; x)$, of degree at most $N$, by
\begin{equation}\label{ef60}
k_{\mu}(N; x) = \sum_{n = -N}^N \uk_{\mu}(N; n)e(nx),
\end{equation}
where the Fourier coefficients are given by (recall here Lemma \ref{lem3.3})
\begin{align}\label{ef61}
\begin{split}
\uk_{\mu}(N; 0) & = \int_0^{\infty} \bigl\{ - \tfrac{2}{\lambda} + \tfrac{1}{2N+2}\, \tK\bigl(\tfrac{\lambda}{2N+2},0\bigr)\bigr\}\, \dmu\\
& = - \int_0^{\infty} \bigl\{ \tfrac{2}{\lambda} - \tfrac{1}{2N+2}\, \csch \big(\tfrac{\lambda}{4N+4}\bigr) \bigr\}\, \dmu
\end{split}
\end{align}
and
\begin{equation}\label{ef62}
\uk_{\mu}(N; n) = \tfrac{1}{2N+2} \int_0^{\infty} \tK\bigl(\tfrac{\lambda}{2N+2}, \tfrac{n}{2N+2}\bigr)\ \dmu,
\end{equation}
for $n\not= 0$.  

\begin{theorem}\label{thm6.2} Let $N$ be a nonnegative integer and assume that $\mu$ satisfies {\rm (\ref{Intro4})}.
\begin{itemize}
\item[(i)]  If $\wk(x)$ is a trigonometric polynomial of degree at most $N$, then
\begin{equation}\label{ef63}
\int_{\R/\Z} \left| q_{\mu}(x) - \wk(x) \right| \dx \geq \int_0^{\infty} \bigl\{ \tfrac{2}{\lambda} - \tfrac{2}{\lambda} \sech \bigl( \tfrac{\lambda}{4N+4} \bigr) \bigr\} \, \dmu
\end{equation}
with equality if and only if $\wk(x) = k_{\mu}(N; x)$.
\item[(ii)] For $x \in \R/\Z$ we have
\begin{equation}\label{ef64}
\sgn(\cos \pi (2N+2) x) = \sgn\left\{q_{\mu}(x) - k_{\mu}(N; x) \right\}.
\end{equation}
\end{itemize}
\end{theorem}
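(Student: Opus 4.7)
The plan is to derive Theorem \ref{thm6.2} from Theorem \ref{thm6.1} by integrating with respect to $\mu$, in exactly the same way that Theorem \ref{thm1.2} was obtained from Theorem \ref{thm1.1}. The first step is to identify $k_{\mu}(N;x)$ as the $\mu$-integral of $k(\lambda,N;x)$. Comparing (\ref{ef6}) with (\ref{ef60})--(\ref{ef62}), and recalling that $\tK(\lambda,0) = 2\lambda^{-1}\sinh(\lambda/2)^{-1}\cdot 1/\cdots$ — more precisely, using Lemma \ref{lem3.3} at $t=0$ to rewrite the constant term — we find termwise that
\begin{equation*}
k_{\mu}(N;x) = \int_0^{\infty} k(\lambda,N;x)\, \dmu,
\end{equation*}
an identity that makes sense because the Fourier coefficients of $k(\lambda,N;x)$ are $\mu$-integrable under hypothesis (\ref{Intro4}). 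Combined with the definition (\ref{ef50}) of $q_{\mu}$ and Fubini's theorem, this yields
\begin{equation*}
q_{\mu}(x) - k_{\mu}(N;x) = \int_0^{\infty} \bigl\{p(\lambda,x) - k(\lambda,N;x)\bigr\}\, \dmu.
\end{equation*}
Part (ii) then follows immediately from Theorem \ref{thm6.1}(ii): since the integrand has the same sign as $\cos\pi(2N+2)x$ for every $\lambda>0$, so does the integral.

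For the upper estimate in (i), Fubini--Tonelli and Theorem \ref{thm6.1}(i) give
\begin{equation*}
\int_{\R/\Z}\bigl|q_{\mu}(x)-k_{\mu}(N;x)\bigr|\,\dx = \int_0^{\infty}\!\int_{\R/\Z}\bigl|p(\lambda,x)-k(\lambda,N;x)\bigr|\,\dx\,\dmu = \int_0^{\infty}\!\bigl\{\tfrac{2}{\lambda} - \tfrac{2}{\lambda}\,\sech\bigl(\tfrac{\lambda}{4N+4}\bigr)\bigr\}\,\dmu,
\end{equation*}
so $k_{\mu}(N;x)$ achieves equality in (\ref{ef63}).

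For the matching lower bound, let $\wk(x)$ be an arbitrary trigonometric polynomial of degree at most $N$. The Fourier expansion (\ref{S2.10}) of $\sgn(\cos\pi(2N+2)x)$ is supported on frequencies of the form $\pm(2k+1)(N+1)$, all of which have absolute value at least $N+1$ and hence do not overlap the spectrum of $\wk$; therefore $\wk$ is orthogonal to $\sgn(\cos\pi(2N+2)x)$ on $\R/\Z$. Using duality against this sign function, the Fourier coefficients (\ref{ef54}) of $q_{\mu}$, and Fubini,
\begin{align*}
\int_{\R/\Z}\bigl|q_{\mu}(x)-\wk(x)\bigr|\,\dx &\geq \Bigl|\int_{\R/\Z}\bigl(q_{\mu}(x)-\wk(x)\bigr)\,\sgn(\cos\pi(2N+2)x)\,\dx\Bigr| \\
 &= \Bigl|\int_{\R/\Z} q_{\mu}(x)\,\sgn(\cos\pi(2N+2)x)\,\dx\Bigr| \\
 &= \int_0^{\infty}\!\bigl\{\tfrac{2}{\lambda}-\tfrac{2}{\lambda}\,\sech\bigl(\tfrac{\lambda}{4N+4}\bigr)\bigr\}\,\dmu,
\end{align*}
where the last equality is the calculation already carried out in (\ref{Sec6.3.1}) (the inner $k$-sum is nonnegative, so no trouble arises in pulling the absolute value inside the $\mu$-integral). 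This is (\ref{ef63}).

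Finally, for uniqueness, suppose $\wk$ achieves equality. Then $(q_{\mu}(x)-\wk(x))\sgn(\cos\pi(2N+2)x)$ must be of constant sign a.e., which combined with continuity of $\wk$ and of $q_{\mu}$ on $\R/\Z\setminus\{0\}$ (the latter noted after (\ref{ef55})) forces
\begin{equation*}
\wk\bigl(\tfrac{k+1/2}{2N+2}\bigr) = q_{\mu}\bigl(\tfrac{k+1/2}{2N+2}\bigr),\qquad k=0,1,\dots,2N+1.
\end{equation*}
Since $k_{\mu}(N;x)$ already matches $q_{\mu}$ at these $2N+2$ points (by part (ii)) and a trigonometric polynomial of degree at most $N$ is uniquely determined by its values at $2N+2$ equally spaced points (see \cite[vol.~II, p.~1]{Z}), we conclude $\wk = k_{\mu}(N;x)$. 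The main technical point is verifying the interchange of $\mu$-integration with summation in identifying $k_{\mu}(N;x) = \int k(\lambda,N;x)\,\dmu$; once that is in place, everything else is a direct transcription of the single-$\lambda$ argument of Theorem \ref{thm6.1}.
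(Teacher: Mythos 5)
Your proposal is correct and follows essentially the same route as the paper: identify $k_{\mu}(N;x)=\int_0^\infty k(\lambda,N;x)\,\dmu$, derive (ii) by integrating the sign description of Theorem~\ref{thm6.1}(ii) over $\lambda$, compute the achieved value via Tonelli, and establish the lower bound and uniqueness by pairing against $\sgn(\cos\pi(2N+2)x)$ and then interpolating at the $2N+2$ zeros of $\cos\pi(2N+2)x$. The paper's own write-up is terse, stating the identity (\ref{ef65}) without checking integrability and then saying the lower bound and uniqueness are ``similar to the ones given in Theorem~\ref{thm6.1}''; you fill in precisely those details (the $\mu$-integrability of the Fourier coefficients in (\ref{ef61})--(\ref{ef62}), the explicit observation that the spectrum of $\sgn(\cos\pi(2N+2)x)$ sits at frequencies $\pm(2k+1)(N+1)$ and hence misses every frequency of $\wk$, and the passage from the equality case to the interpolation condition), which is exactly the intended unwinding of the paper's shorthand.
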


\begin{proof}
We use the elementary identity 
\begin{equation}\label{ef65}
k_{\mu}(N; x) = \int_0^{\infty} k(\lambda, N; x)\, \dmu.
\end{equation}
Expression (\ref{Sec6.1.1}), together with (\ref{ef50}) and (\ref{ef65}), imply (\ref{ef64}). Using (\ref{Sec6.1.1}) and (\ref{ef24}) we observe that
\begin{align}
 \begin{split}
\int_{\R/\Z} \left| q_{\mu}(x) -  k_{\mu}(N; x) \right| \dx & = \int_{\R/\Z} \left| \int_0^{\infty} \bigl\{p(\lambda, x) - k(\lambda, N; x)\bigr\}\, \dmu \right| \dx \\
& = \int_{\R/\Z}  \int_0^{\infty} \left| p(\lambda, x) - k(\lambda, N; x) \right|\, \dmu\,  \dx \\
& = \int_0^{\infty} \int_{\R/\Z} \left| p(\lambda, x) - k(\lambda, N; x) \right|\,\dx \, \dmu\\
& = \int_0^{\infty} \bigl\{ \tfrac{2}{\lambda} - \tfrac{2}{\lambda} \sech \bigl( \tfrac{\lambda}{4N+4} \bigr) \bigr\} \, \dmu.
 \end{split}
\end{align}
This proves that equality happens in (\ref{ef63}) when $\wk(x) = k_{\mu}(N; x)$. The proof of the lower bound (\ref{ef63}) and the uniqueness part are similar to the ones given in Theorem \ref{thm6.1}.
\end{proof}


\begin{thebibliography}{99}

\bibitem{BMV}
J.~T.~Barton, H.~L.~Montgomery, and J.~D.~Vaaler,
\newblock Note on a Diophantine inequality in several variables,
\newblock Proc. Amer. Math. Soc., 129, (2000), 337--345.

\bibitem{CV} 
E.~Carneiro and J.~D.~Vaaler, 
\newblock Some extremal functions in Fourier analysis, II,
\newblock submitted, (2008).



\bibitem{HV} 
J.~Holt and J.~D.~Vaaler,
\newblock The Beurling-Selberg extremal functions for a ball in the Euclidean space,
\newblock Duke Mathematical Journal, 83, (1996), 203--247.

\bibitem{GV}
S.~W.~Graham and J.~D.~Vaaler,
\newblock A class of extremal functions for the Fourier transform,
\newblock Tran. Amer. Math. Soc. 265, (1981), 283--302.



\bibitem{LV} 
X.~J.~Li and J.~D.~Vaaler,
\newblock Some trigonometric extremal functions and the Erd\"{o}s-Tur\'{a}n type inequalities,
\newblock Indiana Univ. Math. J. 48, (1999), no. 1, 183--236.

\bibitem{Lit}
F.~Littmann,
\newblock Entire majorants via Euler-Maclaurin summation,
\newblock Trans. Amer. Math. Soc. 358, (2006), no. 7, 2821--2836.

\bibitem{Lit2} 
F.~Littmann,
\newblock Entire approximations to the truncated powers,
\newblock Constr. Approx. 22 (2005), no. 2, 273--295.

\bibitem{M} 
H.~L.~Montgomery,
\newblock The analytic principle of the large sieve,
\newblock Bull. Amer. Math. Soc. 84, (1978), no. 4, 547--567.


\bibitem{PP}
M.~Plancherel and G.~Polya,
\newblock Fonctions enti\'eres et int\'egrales de Fourier multiples, (Seconde partie)
\newblock Comment. Math. Helv. 10, (1938), 110--163.

\bibitem{Rudin}
W.~Rudin,
\newblock {\em Real and Complex Analysis}, 3rd edition,
\newblock McGraw-Hill, New York, 1987

\bibitem{S}
A.~Selberg,
\newblock Lectures on Sieves, {\em Atle Selberg: Collected Papers}, Vol. II,
\newblock Springer-Verlag, Berlin, 1991, pp. 65--247.

\bibitem{V}
J.~D.~Vaaler,
\newblock Some extremal functions in Fourier analysis,
\newblock Bull. Amer. Math. Soc. 12, (1985), 183--215.


\bibitem{Y}
R.~M.~Young,
\newblock {\em An Introduction to Nonharmonic Fourier Series},
\newblock Academic Press, New York, 1980.

\bibitem{Z} 
A.~Zygmund,
\newblock {\em Trigonometric Series},
\newblock Cambridge University Press, 1959.

\end{thebibliography}
\end{document}